\newcommand{\ZZ}{\mathbb{Z}} 
\newcommand{\QQ}{\mathbb{Q}} 
\newcommand{\CC}{\mathbb{C}} 
 \newcommand{\OO}{\mathscr{O}}
\newcommand{\Fq}{\mathbb{F}_q} 
\numberwithin{equation}{section} 
\newtheorem{thmx}{Theorem}
\newtheorem{theorem}{Theorem}[section]
\newtheorem{lemma}[theorem]{Lemma}
\newtheorem{coro}[theorem]{Corollary}
\theoremstyle{definition} \newtheorem{defn}[theorem]{Definition}
\newtheorem{remark}[theorem]{Remark}
\theoremstyle{remark}
\newcounter{exercise}[subsection]
\newcommand{\md}{\text{ mod }} 
\newcommand{\mF}{\mathbb{F}}
\newcommand{\cC}{\mathcal{C}} 
\newcommand{\cS}{\mathcal{S}} 
\newcommand{\cO}{\mathcal{O}} 
\newcommand{\cQ}{\mathcal{Q}}
 \newcommand{\mand}{\text{ and }}
 \newcommand{\inv}{^{-1}}
    \newcommand{\mfor}{\text{ for }} \newcommand{\mif}{\text{ if
  }}
\newcommand{\brc}[1]{\left[ #1 \right]}
\newcommand{\unit}{^{\times}}
 \newcommand{\nrm}{\mathrm{N}}
 \newcommand{\paren}[1]{{\left(#1\right)}}
\def\quotient#1#2{\raise1ex\hbox{$#1$}{\Large/} \lower1ex\hbox{$#2$}}
\newcommand{\cdef}[1]{{\color{black}
    \textsf{\textsf{#1}}}} 
 \DeclareMathOperator{\Frob}{Frob}
\DeclareMathOperator{\GL}{GL} 
\DeclareMathOperator{\Tr}{Tr}
\DeclareMathOperator{\Gal}{Gal}
\title{Mertens' theorem for Chebotarev sets}
\author{Santiago Arango-Piñeros} \address{Department of Mathematics,
  Emory University, Atlanta, GA 30322, USA}
\email{santiago.arango@emory.edu}
\urladdr{\url{http://www.math.emory.edu/~sarang2/}}
\author{Daniel Keliher} \address{Department of Mathematics, Tufts
  University, Medford, MA 02144, USA} \email{daniel.keliher@tufts.edu}
\urladdr{\url{https://www.danielkeliher.com/}}
\author{Christopher Keyes} \address{Department of Mathematics, Emory
  University, Atlanta, GA 30322, USA}
\email{christopher.keyes@emory.edu}
\urladdr{\url{http://www.math.emory.edu/~ckeyes3/}}
\begin{document}

\maketitle

\begin{abstract}
  We generalize Mertens' product theorem to Chebotarev sets of prime ideals in Galois
  extensions of number fields. Using work of Rosen, we extend an argument of Williams from  cyclotomic extensions to this more general case. Additionally, we compute these products for Cheboratev sets in abelian extensions, $S_3$ sextic extensions, and sets of primes represented by some quadratic forms.  
\end{abstract}

\section{Introduction}
The Chebotarev density theorem is a deep generalization of the prime
number theorem; it contains Dirichlet's theorem for primes in
arithmetic progressions as a special case. In \cite{williams1974},
Williams proved Mertens' theorem for primes in arithmetic
progressions. Here, adapting Williams' method, we generalize Mertens' theorem to Chebotarev sets of prime ideals in a number field. Given a Galois extension of number fields $E/ F$ with Galois group $G\colonequals \Gal(E/F)$, and given a conjugacy class $C \subseteq G$, we prove
\begin{equation}
  \label{eq:main}
  \prod_{\substack{\nrm P \, \leq \, x \\
      \Frob_P = C}} \paren{1-\dfrac{1}{\nrm P}} \sim
  \paren{\dfrac{e^{-\gamma(E/F, C)}}{\log x}}^{|C|/|G|}, \quad \text{ as } x \to \infty ,
\end{equation}
where $P$ runs over all primes of $F$ which are unramified in $E$ and with absolute
norm bounded by $x$. In addition, we provide a power saving error term
and a description of the constant
$e^{-\gamma(E/F, C)}$.

Taking $E = F = \QQ$, \eqref{eq:main} specializes to Mertens' theorem \cite{mertens1874}; i.e.,
\begin{equation}
  \label{eq:Mertens}
  \prod_{p \, \leq \, x} \paren{1-\dfrac{1}{p}} \sim
  \dfrac{e^{-\gamma}}{\log x}, \quad \text{ as } x \to \infty,
\end{equation}
where $\gamma$ is the Euler constant. See, for example, \cite[Theorem 2.7]{montgomeryvaughn2006} for a modern discussion and proof of \eqref{eq:Mertens}.

Further, taking a cyclotomic extension $E = \QQ(\zeta_b) \supset F = \QQ$, the Galois group is isomorphic to $(\ZZ/b\ZZ)\unit$. Picking the conjugacy class corresponding to some element $a \in (\ZZ/b\ZZ)\unit$, and letting $\varphi(b) \colonequals \#(\ZZ/b\ZZ)\unit$ be the usual totient function, \eqref{eq:main} specializes to Williams' theorem \cite[Theorem 1]{williams1974}
\begin{equation}
  \label{eq:Williams}
  \displaystyle\prod_{\substack{p\, \leq \, x \\ p \, \equiv \, a \md b}}\left(1 - \dfrac{1}{p}\right) \sim \left(\dfrac{e^{-\gamma(a,b)}}{\log x}\right)^{1/\varphi(b)}, \quad \text{ as } x \to \infty.
\end{equation}

Our result relies heavily on work of Rosen \cite[Theorem 2]{rosen1999},
who proved the Mertens' analog of Landau's prime ideal
theorem. Taking $E = F \supset \QQ$,  \eqref{eq:main} specializes to Rosen's result; i.e.,
\begin{equation}
  \label{eq:rosen}
  \prod_{\nrm P \, \leq \, x } \paren{1-\dfrac{1}{\nrm P}} \sim
  \dfrac{e^{-\gamma_E}}{\log x}, \quad \text{ as } x \to \infty.
\end{equation}
We summarize these cases, in analogy with the corresponding prime number theorems, in the following table. 
\begin{center}
    \begin{table}[h]
    \setlength{\arrayrulewidth}{0.5mm}
    \setlength{\tabcolsep}{5pt}
    \renewcommand{\arraystretch}{2}
    \caption{Prime Number theorems vs. Mertens-type theorems}
        \rowcolors{1}{gray!25}{white!100}
        \begin{tabular}{|c|c|c|}
        \hline
        Trivial extension & Prime Number theorem & Mertens' theorem  \\
        $E = F = \mathbb{Q}$ & $\displaystyle \sum_{p \, \leq \, x} 1 \sim \tfrac{x}{\log x}$ & $\displaystyle\prod_{p \, \leq \, x}\left(1 - \tfrac{1}{p}\right) \sim \tfrac{e^{-\gamma}}{\log x}$ \\ \hline
        Cyclotomic extension & Dirichlet's theorem  & Williams' theorem \\
         $E = \mathbb{Q}(\zeta_b),  F = \mathbb{Q}$ & $\displaystyle\sum_{\substack{p \, \leq \, x \\ p \, \equiv \, a \md b}} 1 \sim \tfrac{1}{\varphi(b)}\tfrac{x}{\log x}$ & $\displaystyle\prod_{\substack{p \, \leq \, x \\ p \, \equiv \, a \md b}}\left(1 - \tfrac{1}{p}\right) \sim \left(\tfrac{e^{-\gamma(a,b)}}{\log x}\right)^{1/\varphi(b)}$ \\ \hline
        Number field & Laundau's theorem  & Rosen's theorem \\
         $E = F \supseteq \QQ$ & $\displaystyle\sum_{\nrm P \, \leq \, x } 1 \sim \tfrac{x}{\log x}$ & $\displaystyle\prod_{\nrm P \, \leq \, x }\left(1 - \tfrac{1}{\nrm P}\right) \sim \tfrac{e^{-\gamma_E}}{\log x}$ \\ \hline
        Galois extension & Chebotarev's theorem & Equation \ref{eq:main}\\
        $E \supseteq F \supseteq \QQ$ & $\displaystyle\sum_{\substack{\nrm P\, \leq \, x \\ \Frob_P = C}} 1 \sim \tfrac{|C|}{|G|}\tfrac{x}{\log x}$ & \cellcolor{cyan!10} $\displaystyle\prod_{\substack{\nrm P \, \leq \, x \\ \Frob_P = C}}\left(1 - \tfrac{1}{\nrm P}\right) \sim \left(\tfrac{e^{-\gamma(E/F,C)}}{\log x}\right)^{|C|/|G|}$\\ \hline
    \end{tabular}
\end{table}    
\end{center}

\subsection{Notation}
We use $s$ to denote a complex variable and write $s \colonequals \sigma + i t$ for its real and imaginary parts.

For an algebraic number field $F/ \mathbb{Q}$, let  $\mathscr{O}_F$ be its ring of integers. Given a non-zero integral ideal $I \trianglelefteq \mathscr{O}_F$, we use $\mathrm{N}_F(I) \colonequals \#(\mathscr{O}_F/I)$ and $\varphi_F(I) \colonequals \#(\mathscr{O}_F/I)^\times$ to denote its absolute norm and totient, respectively. We will take $\Sigma_F$ to be the set of maximal ideals of $\OO_F$.

The Dedekind zeta function of $F$ is denoted by $\zeta_F(s)$, and $\varkappa_F$ will stand for its residue at the pole $s=1$.

Given a subset $S\subseteq \Sigma_F$ and a real number $x \geq 2$, we define $S(x) \colonequals \{P \in S : \nrm_F(P) \leq x\}$. If $S$ has a natural density, it will be denoted by $\delta(S)$.

Throughout the paper, $E/ F$ will be a Galois extension of number fields with Galois group $G \colonequals \Gal(E/F)$, and $C \subseteq G$ will be a fixed conjugacy class. The letters $Q$ and $P$ stand for elements of $\Sigma_E$ and $\Sigma_F$, respectively. Moreover, $Q$ will always be a prime of $E$ above $P$. Their respective residue fields are denoted by $\mF_Q$ and $\mF_P$.
\begin{center}
\begin{tikzcd}
	E \arrow[d, "{G=\text{Gal}(E/F)}"', no head] \arrow[r, "{\supset}", phantom]& Q \arrow[d, no head] & & \mathbb{F}_Q \arrow[d, "{\mathrm{Gal}(\mF_Q/\mF_P)}", no head] \\
	F \arrow[r, "{\supset}", phantom] & P & & \mathbb{F}_P
\end{tikzcd}
\end{center}

Denote by $I_Q \trianglelefteq D_Q \subseteq G$ the inertia and decomposition groups of a prime $Q$ above $P$. Choosing another prime above $P$, the corresponding inertia and decomposition groups are $G$-conjugates of $I_Q$ and $D_Q$. We choose a \cdef{Frobenius element} $\Frob_Q \in D_Q$ whose image in $\Gal(\mF_Q/\mF_P)$ is the cyclic generator. Frobenius elements are only defined modulo the inertia subgroup. Recall that $P\in \Sigma_F$ is unramified in $E$ if and only if $P$ does not divide the discriminant ideal $\Delta \colonequals \Delta_{E/F}$. We will denote the set of unramified primes by $S_{E/F} \subseteq \Sigma_F$. For an unramified prime, $P$, we denote by $\Frob_P$ the \cdef{Frobenius conjugacy class} of Frobenius elements at all primes $Q$ above $P$. 
Given a conjugacy class $C\subseteq G$, let $\cC$ be the set of unramified primes in $\Sigma_F$ with Frobenius conjugacy class equal to $C$.

Let $\rho\colon G \to \GL_n(\CC)$ be a representation of $G$ with underlying vector space $V$. We will use $\chi$ to denote the trace of $\rho$. Given any prime $P\in \Sigma_F$, and $Q\in\Sigma_E$ above $P$, let
\begin{equation}
  \label{eq:L_P}
  L_P(s, \chi, F) \colonequals
  \det\paren{I-\rho(\Frob_Q)|_{V^{I_Q}}(\nrm P)^{-s}}\inv \, , \mfor
  \sigma > 1,
\end{equation}
be the \cdef{Artin Euler factor} at $P$. The \cdef{Artin $L$-function} of $\chi$ is defined for $\sigma > 1$ by the Euler product $L(s, \chi, F) \colonequals \prod_{P\in\Sigma_F}L_P(s, \chi, F)$. We will use the facts that $L(s, \chi, F)$ has a meromorphic extension to the complex numbers, and if $\chi$ is a nontrivial character then $L(1, \chi, F) \neq 0$. When a Galois extension $E/F$ is fixed, we abbreviate $L(s,\chi,F)$ to $L(s, \chi)$. For a comprehensive introduction to the topic of Artin $L$-functions, see \cite{murtymurty2012}. 

\subsection{Main result}
Now that we have the necessary notation in place, we are ready to state our main result.
\begin{thmx}
  \label{thm:main}
  Let $E/ F$ be a Galois extension of number fields, with Galois group $G \colonequals \Gal(E/F)$, and let $C \subset G$ be a conjugacy class. Then,
  \begin{equation}
  \label{eq:3}
    \prod_{P \in \cC(x)} \paren{1-\dfrac{1}{\nrm P}} =
    \paren{\dfrac{e^{-\gamma(E/F,C)}}{\log
    x}}^{|C|/|G|}+O\paren{\dfrac{1}{(\log x)^{\delta(\cC) + 1}}}
\end{equation}
  when $x \to \infty$, and the implied constant depends on the extension E/F and C. Furthermore, the constant $e^{-\gamma(E/F,C)}$ is given by
  \begin{equation}\label{eq:altconstant}
      e^{-\gamma(E/F, C)} = e^{-\gamma_F} \prod_{P \in \Sigma_F} \paren{1-\dfrac{1}{\nrm P}}^{\alpha(E/F, C; P)}
  \end{equation}
  where $\gamma_F \colonequals \gamma + \log \varkappa_F$, and
  \begin{equation}
      \alpha(E/F, C; P) = \begin{cases}
      -1, & P \mid \Delta, \\
      \tfrac{|G|}{|C|}-1, & \Frob_P = C, \\
      -1, & \Frob_P \neq C.
      \end{cases}
  \end{equation}
\end{thmx}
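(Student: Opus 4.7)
The plan is to adapt Williams' argument \cite{williams1974} to the setting of Artin $L$-functions. I would first take logarithms and expand
\[
  \log \prod_{P \in \cC(x)} \paren{1 - \frac{1}{\nrm P}} = -\sum_{P \in \cC(x)} \frac{1}{\nrm P} - \sum_{P \in \cC(x)} \sum_{k \geq 2} \frac{1}{k \nrm P^k}.
\]
The double sum on the right is majorized by the absolutely convergent series $\sum_{P \in \Sigma_F}\sum_{k \geq 2} 1/(k \nrm P^k)$, and hence equals a constant plus a tail decaying faster than $1/\log x$. This reduces the theorem to establishing $S_C(x) \colonequals \sum_{P \in \cC(x)} 1/\nrm P = \frac{|C|}{|G|}\log \log x + M_C + O(1/\log x)$ for an explicit constant $M_C$, together with an identification of that constant.

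Second, by orthogonality of irreducible characters of $G$, for any $c \in C$,
\[
  \mathbf{1}_C(g) = \frac{|C|}{|G|} \sum_{\chi \in \widehat{G}} \overline{\chi(c)}\, \chi(g),
\]
and substitution yields
\[
  S_C(x) = \frac{|C|}{|G|} \sum_{\chi \in \widehat{G}} \overline{\chi(c)} \sum_{P \in S_{E/F}(x)} \frac{\chi(\Frob_P)}{\nrm P}.
\]
For the trivial character, Rosen's theorem \eqref{eq:rosen} (adjusted to exclude the finitely many primes dividing $\Delta$, whose contribution is $O(1)$) gives $\sum_{P \in S_{E/F}(x)} 1/\nrm P = \log \log x + M_F + O(1/\log x)$ for an explicit $M_F$. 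For each nontrivial $\chi$, the sum $\sum_{P \in S_{E/F}(x)} \chi(\Frob_P)/\nrm P$ converges to a constant $A_\chi$ closely related to $\log L(1,\chi)$, whose non-vanishing was noted in the introduction. To obtain the quantitative $O(1/\log x)$ remainder, I would apply Brauer's induction theorem to write $L(s,\chi)$ as a product of Hecke $L$-functions with integer exponents, invoke their standard zero-free regions near $s=1$, and pass via partial summation to the partial sums of $\chi(\Frob_P)/\nrm P$. This effective step for nontrivial $\chi$ is the main obstacle, since Artin $L$-functions of arbitrary irreducible characters are not known to be entire and must be handled indirectly.

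Third, combining the contributions and absorbing $\gamma_F$, the constant from the $k\geq 2$ series, and the $A_\chi$ into a single constant $\gamma(E/F,C)$, I would obtain $S_C(x) = (|C|/|G|)(\log \log x + \gamma(E/F, C)) + O(1/\log x)$, hence after exponentiation
\[
  \prod_{P \in \cC(x)} \paren{1 - \frac{1}{\nrm P}} = \paren{\frac{e^{-\gamma(E/F,C)}}{\log x}}^{|C|/|G|}\paren{1 + O(1/\log x)} = \paren{\frac{e^{-\gamma(E/F,C)}}{\log x}}^{|C|/|G|} + O\paren{\frac{1}{(\log x)^{\delta(\cC)+1}}},
\]
using $\delta(\cC) = |C|/|G|$. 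Finally, the product formula \eqref{eq:altconstant} is obtained by comparing this asymptotic to the $(|C|/|G|)$-th power of \eqref{eq:rosen}. The quotient of the two left-hand sides equals $\prod_{\nrm P \leq x}(1-1/\nrm P)^{\mathbf{1}_{\cC}(P) - |C|/|G|}$, and in each of the three cases ($P\mid\Delta$, $\Frob_P = C$, $\Frob_P\neq C$) the exponent $\mathbf{1}_{\cC}(P) - |C|/|G|$ equals $(|C|/|G|)\cdot \alpha(E/F,C;P)$. Taking $x \to \infty$ yields \eqref{eq:altconstant}, with convergence of the infinite product guaranteed by the same nontrivial-character analysis used above.
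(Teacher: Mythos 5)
Your proposal is correct in outline, but it takes a genuinely different route from the paper. You argue additively: take logarithms, reduce to estimating $\sum_{P \in \cC(x)} 1/\nrm P$, split it by column orthogonality into the sums $\sum_{P \in S_{E/F}(x)} \chi(\Frob_P)/\nrm P$, handle the trivial character by Rosen's theorem, and handle the nontrivial characters by Brauer induction plus zero-free regions of Hecke $L$-functions; this is essentially the Hardy-style strategy the paper attributes to Bardestani and Freiberg. The paper instead follows Williams multiplicatively: orthogonality is applied at the level of Euler products as in \eqref{eq:OrthogonalityRelations}, the nontrivial characters are handled by quoting Rosen's Mertens-type theorem for Artin $L$-functions (Theorem \ref{thm:MertensArtin}), and the mismatch between $\paren{1-1/\nrm P}^{\chi(P)}$ and the Artin Euler factor is controlled by the auxiliary $M$- and $K$-functions (Lemmas \ref{lemma:M} and \ref{lemma:K}). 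Your route buys a shorter path to \eqref{eq:altconstant}: dividing by the $|C|/|G|$-th power of Rosen's asymptotic and checking that $\mathbf{1}_{\cC}(P) - |C|/|G| = \tfrac{|C|}{|G|}\,\alpha(E/F,C;P)$ in the three cases is correct, and the resulting infinite product, like the paper's, converges only conditionally when ordered by norm. It also recovers the stated error term, since the main term has size $(\log x)^{-\delta(\cC)}$ and your remainder in the exponent is $O(1/\log x)$. What it costs is that the analytic heart --- making $\sum_P \chi(\Frob_P)/\nrm P = A_\chi + O(1/\log x)$ effective for nontrivial irreducible $\chi$ --- is only sketched: Brauer induction, cancellation of the polar contributions of the trivial constituents (which occurs because $\langle \chi, \mathbf{1}\rangle = 0$), zero-free regions for the resulting Hecke $L$-functions, and partial summation do carry this out with constants depending on $E/F$ and $C$, but this is exactly the work the paper outsources to Rosen's Theorem 5, so your version is heavier to execute in full. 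Finally, note that the paper's argument additionally produces the explicit expression \eqref{eq:constant} for the constant in terms of $L(1,\chi)$, $K_{\cS,\chi}$, $M_{\cS,\chi}$, $R_{\cS,\chi}$, which your comparison-with-Rosen argument does not yield, though it is not needed for the statement of Theorem \ref{thm:main}.
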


A result similar to Theorem \ref{thm:main}, which we became aware of after finishing our work, appears in Section 8 of an unpublished survey article by Bardestani and Freiberg \cite{bardestanifreiberg}. The approach sketched there follows an adaptation of a proof of Mertens' Theorem due to Hardy, while our method closely follows the strategy of Williams and obtains both an improved statement for the error term and a description of the constants involved.  

\begin{remark}[Error terms]
    The error term $O\left( \frac{1}{(\log x)^{\delta(\cC) + 1}}\right)$ in \eqref{eq:3} agrees with that given by Williams \cite{williams1974}. In the case of a cyclotomic extension $\QQ(\zeta_b)/\QQ$, the error term may be improved by studying zero-free regions of Dirichlet L-functions; see \cite{langzacc}. Assuming the generalized Riemann hypothesis (GRH), one can improve this error term all the way to $O\left( \frac{(\log x)^{1 - \varphi(b)}}{\sqrt{x}}\right)$ \cite[Theorem 4]{langzacc}. Assuming GRH, one also obtains similarly sharp error estimates for \eqref{eq:rosen}, Mertens' theorem over number fields; see \cite[Theorem 7]{lebacque2007}. In order to carry these improvements to the error term in \eqref{eq:3} to the general case, we need faster convergence of $L(1, \chi)$ for irreducible non-trivial $\chi$ than what we use in Theorem \ref{thm:RosenThm2} (see \cite[Theorem 5]{rosen1999}).
\end{remark}

\subsection{Layout} In Section 2, we summarize the work of Williams and Rosen and prove some supporting lemmas. In Section 3 we prove Theorem \ref{thm:main}. In Section 4 we provide some examples.

\section{Background}
\subsection{Williams' argument}
Consider momentarily the case of a cyclotomic extension. Let $b$ be a positive integer, and choose $0 < a < b$ coprime to $b$. In \cite{williams1974}, Williams proved 
\begin{equation}
  \label{eq:williams}
  \prod_{\substack{p \, \leq \, x \\ p \, \equiv \, a \md b}}\paren{1-\dfrac{1}{p}}
  = \paren{\dfrac{e^{-\gamma(a,b)}}{\log x}}^{1/\varphi(b)} +
  O_b\paren{\dfrac{1}{(\log x) ^{1/\varphi(b)+1}}}.
\end{equation}
Furthermore, he was able to give a formula for the constant $\gamma(a,b)$ in terms of
\begin{itemize}
\item the Euler constant $\gamma \colonequals \lim_{s \to 1^+}\left(\zeta(s) - \frac{1}{s-1}\right)$;
\item the ramified primes of the extension $\QQ(\zeta_b)/\QQ$, namely $\prod_{p\mid b}(1-p\inv)\inv = b/\varphi(b)$;
\item the values at $s=1$ of the Dirichlet $L$-functions $L(s,\chi)$, for all non-trivial irreducible characters $\chi$ of the Galois group $\Gal(\QQ(\zeta_b)/\QQ) \cong (\ZZ/b\ZZ)\unit$;
\item the values at $s=1$ of some auxiliary functions $K(s,\chi)$, attached to all non-trivial irreducible characters $\chi$ of the Galois group $\Gal(\QQ(\zeta_b)/\QQ) \cong (\ZZ/b\ZZ)\unit$.
\end{itemize}
Explicitly,
\begin{equation}
  \label{eq:williams_constant}
  e^{-\gamma(a,b)} \colonequals e^{-\gamma} \dfrac{b}{\varphi(b)} \prod_{\chi\neq \chi_0}\paren{\dfrac{K(1,\chi)}{L(1,\chi)}}^{\overline{\chi}(a)}.
\end{equation}

The crux of the proof is to use the orthogonality relations between irreducible characters of finite groups to write

\begin{equation}
  \label{eq:williamsOrthogonalityRelations}
  \prod_{\substack{p \, \leq \, x \\ p \, \equiv \, a \md
    b}}\paren{1-\dfrac{1}{p}}^{\varphi(b)} = \prod_{\chi}
\brc{\prod_{p \, \leq \, x} \paren{1-\dfrac{1}{p}}^{\chi(p)}}^{\overline{\chi}(a)},
\end{equation}
where $\chi$ ranges over all the irreducible characters of $(\ZZ/b\ZZ)\unit$. Afterward, he defines an arithmetic function $k_\chi(n)$ for each $\chi$ that allows one to factor out the Euler factors of the Dirichlet $L$-function as follows

\begin{equation}
  \label{eq:factor}
  \paren{1-\dfrac{1}{p}}^{\chi(p)} =
  \paren{1-\dfrac{\chi(p)}{p}}\paren{1-\dfrac{k_\chi(p)}{p}}\inv.
\end{equation}
Defining
\begin{equation}
  \label{eq:williamsK}
  K(s,\chi) \colonequals \prod_p K_P(s,\chi) 
  \colonequals \prod_p\paren{1-\dfrac{k_\chi(p)}{p^s}}\inv,
  \quad \mfor \sigma > 0,
\end{equation}
the argument then reduces to calculating the asymptotics as $x \to \infty$ of the partial products
\begin{align*}
  \prod_{p \, \leq \, x} L_P(1,\chi)\inv \quad \mand \quad \prod_{p \, \leq \, x} K_P(1,\chi).
\end{align*}
When $\chi = \chi_0$, the calculation follows from Mertens'
theorem. For non-trivial characters, the result follows by standard methods. See \cite{williams1974} for additional details. \\

Back to the general case of an arbitrary Galois extension of number fields $E/ F$, with Galois group $G$, and $C \subseteq G$ a fixed conjugacy class, essentially the same argument works when all the irreducible representations of $G$ are one dimensional (e.g. the case of abelian extensions). However, for higher dimensional representations, we are led to consider a linear approximation of Artin's $L$-function, which we call $M(s,\rho)$ for alphabetical reasons.

\subsection{Rosen's Work}

 Our goal is now to extend the tools used by Williams' to the case of arbitrary Galois extensions. The following theorems of Rosen in \cite{rosen1999} gives estimates of analogues of the partial products of $L_P(1, \chi)$ and $\zeta_F(s)$ as above. 

Rosen's generalization of Mertens' theorem is analogous to the so called Prime Ideal Theorem; Landau's generalization of the Prime Number Theorem to prime ideals in number fields.
\begin{theorem}[Theorem 2 in \cite{rosen1999}]
  \label{thm:RosenThm2}
  Let $F / \QQ$ be an algebraic number field. Then,
  \begin{equation}
  \label{eq:Rosen}
    \prod_{P \in \Sigma_F(x)} \paren{1-\dfrac{1}{\nrm P}} =
    \dfrac{e^{-\gamma_F}}{\log x} + O\paren{\dfrac{1}{\log^2 x}},
\end{equation}
  as $x \to \infty$. Furthermore, $\gamma_F = \gamma + \log \varkappa_F,$ and the implied constant in the error term depends only on the number field $F$.
\end{theorem}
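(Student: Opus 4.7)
The plan is to adapt the classical Mertens proof to the number field setting. Taking logs of the product and expanding $-\log(1-u) = \sum_{k \geq 1} u^k/k$ gives
\begin{equation*}
-\log \prod_{P \in \Sigma_F(x)}\paren{1-\frac{1}{\nrm P}} \;=\; \sum_{P \in \Sigma_F(x)} \frac{1}{\nrm P} \;+\; B_F + O\paren{\tfrac{1}{x}},
\end{equation*}
where $B_F \colonequals \sum_{P \in \Sigma_F}\sum_{k \geq 2} 1/(k(\nrm P)^k)$; the tail completion is justified because there are at most $[F:\QQ]$ rational primes below any given norm, reducing convergence to comparison with $\sum_p p^{-2}$. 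So the whole problem reduces to an asymptotic for $S_F(x) \colonequals \sum_{P \in \Sigma_F(x)} 1/\nrm P$.

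Next, I would apply Landau's prime ideal theorem together with partial summation. With $\pi_F(x) = \mathrm{Li}(x) + O\paren{x\, e^{-c\sqrt{\log x}}}$ in hand, Abel summation on $\sum 1/\nrm P$ produces
\begin{equation*}
S_F(x) \;=\; \log\log x + M_F + O\paren{\tfrac{1}{\log x}}
\end{equation*}
for some constant $M_F$ depending only on $F$. The core content of the theorem then becomes the identification $M_F + B_F = \gamma + \log \varkappa_F$.

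To obtain this identity, the plan is an Abelian comparison with the Dirichlet series $\sum_P (\nrm P)^{-s}$. On the one hand, from the Euler product one writes
\begin{equation*}
\log \zeta_F(s) \;=\; \sum_{P} (\nrm P)^{-s} + g_F(s), \qquad \sigma > 1,
\end{equation*}
where $g_F$ is holomorphic for $\sigma > 1/2$ with $g_F(1) = B_F$. The Laurent expansion $\zeta_F(s) = \varkappa_F/(s-1) + O(1)$ then yields $\sum_P (\nrm P)^{-s} = -\log(s-1) + \log \varkappa_F - B_F + O(s-1)$ as $s \to 1^+$. On the other hand, feeding the asymptotic for $S_F$ into Abel summation and performing the substitution $\tau = (s-1)\log t$ in $\int_2^\infty (\log \log t)\, t^{-s}\, dt$ gives
\begin{equation*}
\sum_P (\nrm P)^{-s} \;=\; -\log(s-1) - \gamma + M_F + o(1), \qquad s \to 1^+,
\end{equation*}
where $\gamma$ emerges as $-\int_0^\infty e^{-\tau}\log \tau\, d\tau$. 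Matching the two expressions forces $M_F + B_F = \gamma + \log \varkappa_F = \gamma_F$.

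Exponentiating $-\log \prod = \log \log x + \gamma_F + O(1/\log x)$ yields the main term with error $O(1/\log^2 x)$, and all implicit constants depend only on $F$ (through Landau's theorem and through $\varkappa_F$). The hard part is the constant identification in the Abelian step: Tauberian theorems generally lose additive constants, so one must execute the Mellin-type substitution by hand to pin down the $\gamma$ contribution from $\int_0^\infty e^{-\tau}\log \tau\, d\tau$. Everything else is an invocation of Landau's prime ideal theorem together with standard partial summation bookkeeping.
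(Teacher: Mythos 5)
Your proposal is correct in outline, but note that the paper does not prove this statement at all: it is quoted verbatim as Theorem 2 of Rosen's paper and used as a black box, so there is no internal proof to compare against. What you sketch is essentially the classical Mertens argument in the form Rosen himself carries out: reduce via $-\log(1-u)=\sum_{k\ge 1}u^k/k$ to the sum $\sum_{\nrm P\le x}1/\nrm P$, get $\log\log x + M_F + O(1/\log x)$ from Landau's prime ideal theorem and partial summation, and then pin down $M_F+B_F=\gamma+\log\varkappa_F$ by comparing the two expansions of $\sum_P(\nrm P)^{-s}$ as $s\to 1^+$, one coming from $\log\zeta_F(s)=-\log(s-1)+\log\varkappa_F+o(1)$ and the other from Abel summation with the substitution $\tau=(s-1)\log t$, which produces the $-\gamma=\int_0^\infty e^{-\tau}\log\tau\,d\tau$ term. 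All the steps you indicate go through; the only phrasing to fix is the justification of the tail completion, which should read that there are at most $[F:\QQ]$ primes of $F$ above each rational prime $p$ (and $\nrm P\ge p$), so $\sum_{\nrm P>x}\sum_{k\ge 2}1/(k(\nrm P)^k)\ll\sum_{m>x}m^{-2}=O(1/x)$, and you should also record that the $(s-1)\int_2^\infty O(1/\log t)\,t^{-s}\,dt$ contribution is $O\bigl((s-1)\log\tfrac{1}{s-1}\bigr)=o(1)$ so the constant matching is legitimate.
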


Rosen's proof of Theorem \ref{thm:RosenThm2} extends to a general class of Dirichlet series based on $F$ (Theorem 4 in \cite{rosen1999}). In particular, he proves a Mertens-type theorem for Artin $L$-functions based of $F$, see Theorem 5 in \cite{rosen1999}. We reformulate the original statement in the equivalent case of an irreducible representation.

\begin{theorem}[See Theorem 5 of \cite{rosen1999}]
  \label{thm:MertensArtin}
  Let $E/F$ be a Galois extension of number fields with Galois group $G$. Let $\chi$ be a non-trivial irreducible character of $G$. Then,
  \begin{equation}
  \label{eq:MertensArtin}
    \prod_{P\in\Sigma_F(x)} L_P(1, \chi)\inv = \dfrac{1}{L(1,\chi)} +
    O_{\chi, \, F}\paren{\dfrac{1}{\log x}} .
\end{equation}
\end{theorem}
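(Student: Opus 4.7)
The plan is to take logarithms and reduce the partial-product estimate to a decay estimate for a specific partial Dirichlet series. Because $L(1,\chi)\neq 0$ when $\chi$ is non-trivial and irreducible, it suffices to prove
\[
  \sum_{P\in\Sigma_F(x)}\log L_P(1,\chi)^{-1} \;=\; -\log L(1,\chi) + O\!\paren{\tfrac{1}{\log x}},
\]
since exponentiating and expanding about $-\log L(1,\chi)$ yields the stated asymptotic with an error of the same shape (with implied constant depending on $\chi$ and $F$).

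Writing $A_P \colonequals \rho(\Frob_Q)|_{V^{I_Q}}$ and using the identity $\log\det(I-A) = -\sum_{m\geq 1}\Tr(A^m)/m$, we have
\[
  \log L_P(1,\chi)^{-1} \;=\; -\sum_{m\geq 1}\dfrac{\Tr(A_P^m)}{m\cdot \nrm P^m}.
\]
The ramified primes $P\mid\Delta$ form a finite set, so they contribute a bounded constant to the partial sum and incur zero truncation error once $x\geq\max_{P\mid\Delta}\nrm P$. The $m\geq 2$ portion over all $P$ is absolutely convergent---its modulus is bounded by $\chi(1)\sum_{P}\nrm P^{-2}(1-\nrm P^{-1})^{-1}$, which converges by comparison with $\zeta_F(2)$---so its truncation tail beyond $x$ is $O(1/x)$. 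All such errors are dominated by $O(1/\log x)$, leaving the $m=1$ contribution over unramified primes,
\[
  T(x) \;\colonequals\; \sum_{\substack{P\in\Sigma_F(x)\\ P\nmid\Delta}}\dfrac{\chi(\Frob_Q)}{\nrm P},
\]
as the genuine content of the theorem. The remaining task is to show $T(x) = T_\infty + O(1/\log x)$ for a constant $T_\infty$ depending on $\chi$ and $F$; combined with the absolutely convergent tails above, this reassembles into $-\log L(1,\chi)$.

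The decay estimate for $T(x)$ is the crux of the argument. Grouping primes by Frobenius conjugacy class, Abel summation expresses $T(x)$ in terms of the counting functions $\pi_C(x)\colonequals\#\cC(x)$ weighted by $\chi(C)$, and an effective Chebotarev density theorem---equivalently, a standard zero-free region for $L(s,\chi)$---produces an error in $\pi_C(x)$ which, after partial summation, yields the required $O(1/\log x)$ for $T(x)$. Alternatively, one may invoke Rosen's general framework in \cite[Theorem 4]{rosen1999}, which applies to any Dirichlet series admitting analytic continuation through $s=1$ and delivers Mertens-type estimates with precisely this error; Theorem \ref{thm:MertensArtin} is then the specialization of that framework to $L(s,\chi)$ for non-trivial irreducible $\chi$. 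The main obstacle is not the algebraic reduction above but quantifying the cancellation in $T(x)$: this requires effective analytic input beyond the mere non-vanishing $L(1,\chi)\neq 0$, which is exactly what Rosen's analysis supplies.
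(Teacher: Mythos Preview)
Your proposal is correct, and in fact it lands in the same place as the paper: the paper's entire proof is the single sentence ``We are specializing Theorem 5 of \cite{rosen1999} to the case of an irreducible and non-trivial character; in Rosen's notation, $\rho=\chi$, $k=0$, and $\alpha=L(1,\chi)$.'' Your second alternative---invoking Rosen's general framework \cite[Theorem~4]{rosen1999}---is precisely this.

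What you add is a preliminary unpacking (logarithms, separating $m\geq 2$ and ramified primes, isolating $T(x)$) that the paper skips by citing Rosen wholesale, together with a first alternative route through effective Chebotarev and Abel summation. That alternative does work and is arguably more transparent: once the main terms in $\sum_C \chi(C)\pi_C(t)$ cancel by orthogonality, the zero-free-region error $O\!\bigl(t\exp(-c\sqrt{\log t})\bigr)$ survives partial summation and comfortably beats $O(1/\log x)$. The trade-off is that your route re-derives a special case of Rosen's machinery, whereas the paper simply quotes it; neither approach gains anything the other lacks for the purposes of this paper.
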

\begin{proof}
  We are specializing Theorem 5 of \cite{rosen1999} to the case of an irreducible and non trivial character. In Rosen's notation, $\rho = \chi$, $k=0$, and $\alpha = L(1,\chi)$.
\end{proof}

\subsection{The $M$-function}
Let $P \in \Sigma_F$, and let $\rho$ be an Artin representation of $G$. Let
\begin{equation}
  \label{eq:charpoly}
  f_{\chi,P}(T) \colonequals \det\paren{I-\rho(\Frob_Q)|_{V^{I_Q}}T} \in \CC[T]
\end{equation}
be the characteristic polynomial of $\rho$ corresponding to $P$ via any Frobenius element. Denote the \cdef{trace of Frobenius at} $P$ by 
\begin{equation}
  \label{eq:traceOfFrobenius}
  \chi(P) \colonequals \Tr\rho(\Frob_Q)|_{V^{I_Q}}
\end{equation}
for any prime $Q\in \Sigma_E$ above $P\in\Sigma_F$. Isolating the linear term, we have
\begin{equation}
  \label{eq:linearterm}
  f_{\chi,P}(T) = 1 - \chi(P)T + g_{\chi,P}(T)T^2,
\end{equation}
where $g_{\chi, P}(T) \in \CC[T]$. Factoring out the linear term, we may write
\begin{equation}
  \label{eq:factors}
  f_{\chi,P}(T) = \paren{1 - \chi(P)T}\paren{1 +
  \dfrac{g_{\chi,P}(T)T^2}{1 - \chi(P)T}} \in \CC(T) .
\end{equation}
Taking the change of variables $T = (\nrm P)^{-s}$, we obtain
\begin{equation}
  \label{eq:L_Pfactors}
  L_P(s,\chi) = \paren{1-\dfrac{\chi(P)}{(\nrm P)^s}}\inv\paren{1 +
  \dfrac{g_{\chi,P}( (\nrm P)^{-s})}{(\nrm P)^s((\nrm P)^s - \chi(P))}}\inv, \, \sigma > 1.
\end{equation}
Since $-\log_{NP}(|T|)=\sigma$, we have $0 < |T| \leq \frac{1}{NP}\leq\frac{1}{2}$ when $\sigma \geq 1$. Define
 \begin{equation}\label{eq:chibound}
     \xi_\chi \colonequals \sup_{P \in \Sigma_F} \left( \sup_{T \in [0,1/2]} |g_{\chi,P}(T)| \right)
 \end{equation}
to give an upper bound $g_{\chi,P}(T) \leq \xi_\chi$. This is well defined since $g_{\chi,P}(T)$ depends only on the class of $\text{Frob}_Q$ and the set of the  $g_{\chi, P}$  is finite. 

\begin{defn}[$M$-function]
  Given $P\in \Sigma_F$ and $\rho$ and Artin representation of $G$ with character $\chi$, define the $M$-\cdef{Euler factor at} $P$ by
  \begin{equation}
  \label{eq:M_P}
    M_P(s,\chi) \colonequals \paren{1-\dfrac{\chi(P)}{(\nrm
    P)^s}}\inv, \quad \mfor \sigma > 1.
\end{equation}
  We define the \cdef{$M$-function} as the Euler product $M(s,\chi) \colonequals \prod_{P\in \Sigma_F} M_P(s,\chi)$.
\end{defn}
Note that $M(s,\chi)$ defines an holomorphic function in the half plane $\sigma >1$. When $\rho$ is one dimensional, the polynomial $R_{\chi, P}(T)$ is zero, and in particular $L(s,\chi) = M(s,\chi)$. For higher dimensional representations, this is certainly not the case. We think of $M(s,\chi)$ as a linear approximation of $L(s,\chi)$, at least on the level of local factors.

 As a preliminary step in the proof of Theorem \ref{thm:main}, we prove a Mertens-type theorem for $M(s,\chi)$. Though it would be interesting to further explore the analytic properties of $M(s,\chi)$, we restrict ourselves to applications of $M(s,\chi)$ to the proof of the main theorem.

When the representation $\rho$ is one dimensional, $\chi(P)$ is always a root of unity. For higher dimensional representations, $\chi(P)$ is a sum of roots of unity and $|\chi(P)|\leq \chi(1)$. In particular, it may be the case that $\chi(P) = \nrm P$. To deal with these technicalities, we restrict to a cofinite subset $\cS$ of $\Sigma_F$ over which this inconvenience disappears. Define
\begin{equation}
  \label{eq:S}
  \cS \colonequals \{P \in S_{E/F} : |\chi(P)| < \nrm P, \, \text{for
  every irreducible character } \chi \text{ of } G\}.
\end{equation}
\begin{lemma}
  \label{lemma:M}
  Let $E/F$ be a Galois extension of number fields with Galois group $G$. Let $\chi$ be a non-trivial irreducible character of $G$ and let $\cS$ be as in \eqref{eq:S}. Then,
  \begin{equation}
  \label{eq:6}
    \prod_{P\in \cS(x)} M_P(1,\chi)\inv = \dfrac{R_{\cS,\chi}M_{\cS,\chi}}{L(1,\chi)} +
    O\paren{\dfrac{1}{\log x}}, 
\end{equation}
  when $x\to \infty$ and the implied constant in the error term depends on the extension $E/F$. Furthermore, the constants $R_{\cS, \chi}$ and $M_{\cS,\chi}$ are given by
  \begin{equation}
  \label{eq:1}
      R_{\cS, \chi} = \prod_{P \in \cS}R_P(1, \chi)^{-1} \text{ and }
    M_{\cS,\chi} = \prod_{P\in \Sigma_F - \cS}L_P(1,\chi).
\end{equation}
\end{lemma}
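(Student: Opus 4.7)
My plan is to decouple the product of $M_P(1,\chi)^{-1}$ into a product of $L_P(1,\chi)^{-1}$ terms, controlled by Theorem~\ref{thm:MertensArtin}, and a rapidly convergent correction factor, mirroring the split \eqref{eq:factor} used by Williams.

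For every $P \in \cS$, I introduce the correction factor
\begin{equation*}
R_P(s,\chi) \colonequals 1 + \dfrac{g_{\chi,P}((\nrm P)^{-s})}{(\nrm P)^{s}((\nrm P)^{s} - \chi(P))},
\end{equation*}
so that \eqref{eq:L_Pfactors} reads $L_P(s,\chi) = M_P(s,\chi) R_P(s,\chi)^{-1}$. Taking reciprocals and multiplying over $P \in \cS(x)$ gives
\begin{equation*}
\prod_{P \in \cS(x)} M_P(1,\chi)^{-1} = \paren{\prod_{P \in \cS(x)} L_P(1,\chi)^{-1}} \paren{\prod_{P \in \cS(x)} R_P(1,\chi)^{-1}},
\end{equation*}
reducing the problem to the asymptotics of each factor.

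For the first factor, I observe that $\Sigma_F \setminus \cS$ is finite: the ramification locus is finite, and since $|\chi(P)| \leq \chi(1)$ is a fixed bound while $\nrm P \to \infty$, only finitely many primes violate $|\chi(P)| < \nrm P$. For $x$ exceeding the norm of every prime in $\Sigma_F \setminus \cS$, pulling out the finitely many missing Euler factors and invoking Theorem~\ref{thm:MertensArtin} produces
\begin{equation*}
\prod_{P \in \cS(x)} L_P(1,\chi)^{-1} = M_{\cS,\chi}\prod_{P \in \Sigma_F(x)} L_P(1,\chi)^{-1} = \dfrac{M_{\cS,\chi}}{L(1,\chi)} + O\paren{\dfrac{1}{\log x}},
\end{equation*}
where the division by $L(1,\chi)$ is justified by the nonvanishing of $L(1,\chi)$ for irreducible nontrivial $\chi$ recalled in the notation subsection.

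For the second factor, I would show that the infinite product $\prod_{P \in \cS} R_P(1,\chi)^{-1}$ converges absolutely with a rapidly vanishing tail. For all but finitely many $P \in \cS$, the trivial bound $|\chi(P)| \leq \chi(1)$ yields $|\nrm P - \chi(P)| \geq \tfrac{1}{2}\nrm P$, and combining this with \eqref{eq:chibound} gives $|R_P(1,\chi)-1| \leq 2\xi_\chi/(\nrm P)^{2}$. The tail $\sum_{\nrm P > x} (\nrm P)^{-2}$ is $O(1/x)$ by partial summation against the prime ideal theorem, which implies
\begin{equation*}
\prod_{P \in \cS(x)} R_P(1,\chi)^{-1} = R_{\cS,\chi} + O\paren{\dfrac{1}{x}}.
\end{equation*}
Multiplying the two asymptotics and absorbing cross terms into $O(1/\log x)$ delivers the stated identity. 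The main bookkeeping hurdle is the uniform control of $g_{\chi,P}$, which is precisely what \eqref{eq:chibound} packages for us; once that is in hand, the tail estimate for the $R$-product is routine, and the rest is algebraic manipulation of the Euler factorization.
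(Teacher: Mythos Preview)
Your proposal is correct and follows essentially the same approach as the paper: the identical factorization $L_P = M_P R_P^{-1}$, the same use of Theorem~\ref{thm:MertensArtin} after pulling out the finitely many primes in $\Sigma_F \setminus \cS$, and the same tail estimate for $\prod R_P^{-1}$ via the bound $|R_P(1,\chi)-1| = O((\nrm P)^{-2})$ from \eqref{eq:chibound} together with partial summation. The only cosmetic difference is that the paper passes through logarithms and Taylor series to control the $R$-tail, whereas you bound $|R_P-1|$ directly; both yield the same $O(1/x)$.
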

\begin{proof}
Factoring the linear term of the characteristic polynomial of Frobenius
\begin{equation}
    f_{\chi, P}(T) = (1-\chi(P)T)\left(1 +\frac{g_{\chi,P}(T)T^2}{1-\chi(P)T}\right),
\end{equation}
and define,
\begin{equation}\label{eq:R_P}
    R_P(1, \chi) \colonequals \left(1+\frac{g_{\chi,P}(\nrm P^{-1})NP^{-2}}{1-\chi(P)NP^{-1}}\right) = \left(1+\frac{g_{\chi,P}(\nrm P^{-1})}{NP(NP-\chi(P))}\right).
\end{equation}
Combining \eqref{eq:R_P} with \eqref{eq:L_Pfactors} and \eqref{eq:M_P} gives us that 
\begin{equation}
    L_P(1, \chi) = M_P(1, \chi)R_P(1,\chi)^{-1}.
\end{equation}
Taking a product over $P \in \cS(x)$ of the above expression, we get
\begin{equation} \label{eq:MpAsLpEp}
    \prod_{P \in \cS(x)} M_P(1, \chi)^{-1} = \prod_{P \in \cS(x)}L_P(1,\chi)^{-1} \prod_{P \in \cS(x)}R_P(1,\chi)^{-1}.
\end{equation}
We can use Theorem \ref{thm:MertensArtin} to understand the product of $L_P(1,\chi)$. Doing so, one sees 
\begin{align}
\notag    \prod_{P \in \cS(x)} L_P(1,\chi)^{-1} &=     \prod_{P \in \Sigma_F(x)} L_P(1,\chi)^{-1} \prod_{P\in \Sigma_F(x)-\cS(x)} L_P(1,\chi) \\
    &= \left(\prod_{P\in \Sigma_F(x)-\cS(x)} L_P(1,\chi)\right)\left(\frac{1}{L(1,\chi)} + O_{\chi,F}\left(\frac{1}{\log x}\right)\right) \label{eq:prodLpS}
\end{align}

What remains is to understand the product over $R_P(1,\chi)^{-1}$, i.e.,
\begin{equation}\label{eq:Eprod}
\prod_{P \in \cS(x)} R_P(1,\chi)^{-1} = \prod_{P \in \cS(x)}\left(1+\frac{g_{\chi,P}(\nrm P^{-1})}{NP(NP-\chi(P))}\right)^{-1},
\end{equation}
and to show the product of $R_P(1,\chi)^{-1}$ over all $P \in \cS$ converges, say to 
\begin{equation*}
    R_{\cS, \chi} \colonequals \prod_{P \in \cS}R_P(1, \chi)^{-1}.
\end{equation*}

We have, for large enough $x$,
\begin{equation}
    \prod_{P \in \cS(x)} R_P(1,\chi)^{-1} = R_{\cS, \chi}\prod_{\substack{ \nrm P > x}}R_P(1,\chi).
\end{equation}
To understand the right-most term above, take logs and expand via Taylor series as follows, 
\begin{align}
\notag    \left| \log\left(\prod_{\substack{P \in \cS \\ \nrm P > x}} R_P(1,\chi)\right) \right| &= \left|\sum_{\substack{P \in \cS \\ \nrm P > x}} \log\left(1+\frac{g_{\chi,P}((\nrm P)^{-1})}{\nrm P (\nrm P - \chi(P))}\right)\right| \\
\notag    &\leq \sum_{\nrm P > x} \sum_{j=1}^\infty \frac{1}{j}\left|\frac{\xi_{\chi}}{\nrm P (\nrm P - \chi(P))}\right|^j \\
\notag     &\leq \sum_{j=1}^\infty \frac{1}{j} \left(\sum_{\nrm P > x} \frac{\xi_{\chi}}{\nrm P (\nrm P - \chi(P))}\right)^j \\
     &\leq \sum_{j=1}^\infty \frac{1}{j} \left(\sum_{\nrm P > x} O\left(\frac{1}{(\nrm P )^2}\right)\right)^j =\sum_{j=1}^\infty\frac{1}{j}\left(O\left(\frac{1}{x}\right)\right)^j =O\left(\frac{1}{x}\right). \label{eq:log1term}
\end{align}
Where the first equality in \eqref{eq:log1term} follows from the prime ideal theorem and partial summation. From \eqref{eq:log1term} and the Taylor series of the exponential, observe $\exp\left(O\left(\frac{1}{x}\right)\right)= 1 + O\left(\frac{1}{x}\right)$. This is sufficient to establish  
\begin{equation}
    \prod_{\nrm P > x}R_P(1,\chi) = 1 + O\left(\frac{1}{x}\right),
\end{equation}
and further
\begin{equation}\label{eq:prodEpS}
    \prod_{P \in \cS(x)}R_P(1,\chi)^{-1} = R_{\cS, \chi} + O\left(\frac{1}{x}\right).
\end{equation}

Finally, starting from \eqref{eq:MpAsLpEp} and substituting in both \eqref{eq:prodLpS} and \eqref{eq:prodEpS} appropriately suffices to prove the lemma.
\end{proof}

\subsection{The $K$-function} 

In this section, we investigate the analog of Williams' $K$-function, defined in the case of cyclotomic extensions by \eqref{eq:factor} and \eqref{eq:williamsK}.

\begin{defn}[$K$-function]
  Given $P\in \Sigma_F$ and $\rho$ an Artin representation of $G$ with character $\chi$, define
  \begin{equation}
  \label{eq:k}
    k_\chi(P) \colonequals \nrm P \brc{1 -
  \paren{1-\dfrac{\chi(P)}{\nrm P}}\paren{1-\dfrac{1}{\nrm P}}^{-\chi(P)}}.
\end{equation}
  The $K$-\cdef{Euler factor at} $P$ is defined by
  \begin{equation}
  \label{eq:K_P}
    K_P(s,\chi) \colonequals \paren{1-\dfrac{k_\chi(P)}{(\nrm
    P)^s}}\inv,
\end{equation}
  and we define the $K$-\cdef{function} as the Euler product $K(s,\chi) \colonequals \prod_{P\in \Sigma_F}K_P(s,\chi)$.
\end{defn}
Note that $|\chi(P)| < \nrm P$, so $K_P(1,\chi)$ is well defined and non-zero for every prime $P\in \Sigma_F$. 

To prove Theorem \ref{thm:main} it is enough to restrict the Euler product in the definition of $K$ to the primes in $\cS$.

We first want to know that the truncated product $\prod_{P \in \cS(x)}K_P(1,\chi)$ converges to $\prod_{P \in \cS}K_P(1,\chi)$ quickly, in a precise sense. This is the statement of Lemma \ref{lemma:K}. To prove this, we will need some intermediate lemmas. The following lemma is implicit in \cite{williams1974} and will be critical to the analysis of $k_\chi(p)$.

\begin{lemma}\label{lemma:nonsense}
  Let $a,b$ be complex numbers such that $|a/b| < 1$ and $b \geq 2$. Then,
  \begin{equation}
  \label{eq:technicalLemma}
    b\brc{1-\paren{1-\dfrac{a}{b}}\paren{1-\dfrac{1}{b}}^{-a}} =
    \dfrac{a(a-1)}{b}\brc{\dfrac{1}{2} + \sum_{n=1}^\infty \dfrac{(a+1)\cdots(a+n)}{b^n(n+1)!}\dfrac{n+1}{n+2}}.  
\end{equation}
\end{lemma}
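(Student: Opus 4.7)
The lemma is a purely algebraic identity between convergent series in $1/b$, and the plan is to verify it by expanding both sides and matching coefficients. The hypothesis $b \geq 2$ ensures $|1/b| \leq 1/2 < 1$, so the generalized binomial theorem yields the absolutely convergent expansion
\[
  \left(1-\frac{1}{b}\right)^{-a} = \sum_{n=0}^{\infty} \frac{c_n}{b^n}, \quad \text{where } c_0 \colonequals 1 \text{ and } c_n \colonequals \frac{a(a+1)\cdots(a+n-1)}{n!} \text{ for } n \geq 1.
\]
The assumption $|a/b|<1$ is not needed for the formal manipulation, but serves as the natural convergence hypothesis for the series on the right-hand side of \eqref{eq:technicalLemma}.

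Next I would multiply this expansion by $(1-a/b)$ and collect coefficients of $b^{-n}$. The constant term is $1$, while for $n \geq 1$ the coefficient of $b^{-n}$ is $c_n - a c_{n-1}$. The crucial algebraic simplification is
\[
  c_n - a c_{n-1} = \frac{a(a+1)\cdots(a+n-2)}{(n-1)!}\cdot\frac{a+n-1-an}{n} = -\frac{a(a-1)(n-1)(a+1)(a+2)\cdots(a+n-2)}{n!},
\]
where the last equality uses the trivial identity $a+n-1-an = -(n-1)(a-1)$. In particular, the $n=1$ term vanishes automatically, and every surviving coefficient carries the factor $a(a-1)$ that must appear on the right-hand side.

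Subtracting from $1$, multiplying through by $b$, and reindexing by $m = n-2$ then converts the left-hand side of the lemma into
\[
  \frac{a(a-1)}{b}\sum_{m=0}^{\infty} \frac{(m+1)(a+1)(a+2)\cdots(a+m)}{(m+2)!\,b^m}.
\]
Separating the $m=0$ contribution (which, under the empty-product convention, gives $\tfrac{1}{2}$) and applying the identity $\tfrac{m+1}{(m+2)!} = \tfrac{1}{(m+1)!}\cdot\tfrac{m+1}{m+2}$ to each remaining term puts the sum in exactly the form on the right of \eqref{eq:technicalLemma}. No real obstacle is expected; this is a bookkeeping exercise, and the only thing to watch is the index shift and the empty-product convention at $n=2$ (equivalently $m=0$).
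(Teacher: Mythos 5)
Your proof is correct: the binomial expansion $(1-1/b)^{-a}=\sum_{n\ge 0} c_n b^{-n}$ with $c_n=\tfrac{a(a+1)\cdots(a+n-1)}{n!}$ is valid and absolutely convergent since $0<1/b\le 1/2$, the coefficient computation $c_n-ac_{n-1}=-\tfrac{a(a-1)(n-1)(a+1)\cdots(a+n-2)}{n!}$ (via $a+n-1-an=-(n-1)(a-1)$) is right, the $n=1$ term vanishes as claimed, and the reindexing $m=n-2$ together with $\tfrac{m+1}{(m+2)!}=\tfrac{1}{(m+1)!}\cdot\tfrac{m+1}{m+2}$ and the empty-product convention at $m=0$ reproduces \eqref{eq:technicalLemma} exactly. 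The paper itself only sketches a proof, and its sketch follows a different route: write the left-hand side as a \emph{doubly} infinite series (expanding $(1-1/b)^{-a}$ as an exponential of the logarithm series), rearrange into a power series in $1/b$, and prove the form of the coefficients by induction. Your version replaces the double series and the induction by a single application of the generalized binomial theorem and a closed-form telescoping identity for $c_n-ac_{n-1}$, which is shorter and avoids any rearrangement subtleties beyond multiplying an absolutely convergent series by the two-term factor $(1-a/b)$; the paper's sketched route is closer to Williams' original computation but requires more bookkeeping. One minor quibble: your remark that $|a/b|<1$ is ``the natural convergence hypothesis for the series on the right-hand side'' is not quite accurate --- for fixed $a$ that series converges as soon as $b\ge 2$ (the term ratio tends to $1/b$); the hypothesis $|a/b|<1$ is inherited from the intended application $a=\chi(P)$, $b=\nrm P$ with $P\in\cS$ rather than being needed for the identity. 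Since you never use it, this does not affect the argument.
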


While we omit the details of the proof of Lemma \ref{lemma:nonsense}, we comment that it follows from writing the left hand side as a doubly infinite series. We can then rearrange this series into a power series in $\frac{1}{b}$, then use induction to show that the coefficients take the desired form. The key application of Lemma \ref{lemma:nonsense} is the following estimate.

\begin{lemma}\label{lem:bound_kchi} Let $\chi$ be a $d$-dimensional irreducible character of $G$. Then for all $P \in \cS$,
    \begin{equation}\label{eq:bound_kchi}
        \left| k_\chi(P) \right| \leq \frac{d(d+1)}{2} \frac{1}{\nrm P} + \dfrac{C_d}{(\nrm P)^2}
    \end{equation}
    for some constant $C_d>0$ depending only on $d$.
\end{lemma}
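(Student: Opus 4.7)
The plan is to apply Lemma~\ref{lemma:nonsense} directly with $a = \chi(P)$ and $b = \nrm P$. The hypotheses of that lemma are satisfied for $P \in \cS$: we have $|\chi(P)/\nrm P| < 1$ by the very definition of $\cS$, and $\nrm P \geq 2$ since the absolute norm of a prime ideal is a prime power. Under these substitutions, the left-hand side of \eqref{eq:technicalLemma} is exactly $k_\chi(P)$, so we obtain the identity
\begin{equation*}
    k_\chi(P) = \frac{\chi(P)(\chi(P)-1)}{\nrm P}\left[\frac{1}{2} + \sum_{n=1}^{\infty} \frac{(\chi(P)+1)\cdots(\chi(P)+n)}{(\nrm P)^n (n+1)!} \cdot \frac{n+1}{n+2}\right].
\end{equation*}

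Next I would use the standard bound $|\chi(P)| \leq \chi(1) = d$, which follows from writing $\chi(P)$ as a sum of $d$ roots of unity. This yields $|\chi(P)(\chi(P)-1)| \leq d(d+1)$ and, more generally, $|(\chi(P)+1)\cdots(\chi(P)+n)| \leq (d+1)(d+2)\cdots(d+n) = (d+n)!/d!$. Combining the prefactor with the constant term $\tfrac{1}{2}$ inside the bracket gives the leading contribution
\begin{equation*}
    \left|\frac{\chi(P)(\chi(P)-1)}{2\nrm P}\right| \leq \frac{d(d+1)}{2\nrm P},
\end{equation*}
which is exactly the first term in the desired estimate \eqref{eq:bound_kchi}.

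It then remains to absorb the $n \geq 1$ tail into the $C_d/(\nrm P)^2$ term. Pulling out one extra factor of $1/\nrm P$ from the sum and using $\frac{n+1}{n+2} \leq 1$, the tail is bounded by
\begin{equation*}
    \frac{d(d+1)}{(\nrm P)^2}\sum_{n=1}^{\infty} \frac{(d+1)(d+2)\cdots(d+n)}{(\nrm P)^{n-1}(n+1)!} \leq \frac{d(d+1)}{(\nrm P)^2}\sum_{n=1}^{\infty} \frac{(d+n)!/d!}{2^{n-1}(n+1)!}.
\end{equation*}
The ratio test shows this last series converges (consecutive ratios tend to $1/2$), so it evaluates to some finite constant depending only on $d$; setting $C_d$ equal to $d(d+1)$ times this sum completes the proof.

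I do not foresee a serious obstacle: once Lemma~\ref{lemma:nonsense} is in hand, the argument is a straightforward estimation, and the only mild subtlety is to pull the extra factor of $\nrm P^{-1}$ out of the tail so that the bound comes out as $O((\nrm P)^{-2})$ rather than $O((\nrm P)^{-1})$. The inequality $|\chi(P)| \leq d$ is essentially the only place where the representation-theoretic content enters.
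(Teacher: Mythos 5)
Your proposal is correct and follows essentially the same route as the paper: apply Lemma~\ref{lemma:nonsense} with $a=\chi(P)$, $b=\nrm P$ (valid since $P\in\cS$ gives $|\chi(P)|<\nrm P$ and $\nrm P\geq 2$), use $|\chi(P)|\leq d$ to extract the $\tfrac{d(d+1)}{2\nrm P}$ term, and bound the $n\geq 1$ tail by a convergent $d$-dependent series times $(\nrm P)^{-2}$ using $\nrm P\geq 2$. The only difference is the exact expression for $C_d$ (and your ratio-test justification versus the paper's polynomial-versus-geometric remark), which is immaterial since $C_d$ need only depend on $d$.
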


\begin{proof}
    Fix $P\in \cS$. The conditions of Lemma \ref{lemma:nonsense} are satisfied for $a = \chi(P)$ and $b=\nrm P$. Noting that $|\chi(P)| \leq d$, we have
    \begin{align*}
        |k_\chi(P)| = & \, \left| \dfrac{\chi(P)(\chi(P)-1)}{\nrm P} \brc{\dfrac{1}{2} + \sum_{n=1}^\infty \dfrac{(\chi(P)+1)\cdots(\chi(P)+n)}{(\nrm P)^n(n+1)!}\dfrac{n+1}{n+2}}\right| \\
        \leq & \, \dfrac{d(d+1)}{2}\dfrac{1}{\nrm P} + \dfrac{d(d+1)}{(\nrm P)^2} \sum_{n=0}^\infty \dfrac{(n-1+d)!}{d! \, n!}\dfrac{1}{(\nrm P)^n}\, \\
        \leq & \, \dfrac{d(d+1)}{2}\dfrac{1}{\nrm P} + \paren{\dfrac{d(d+1)}{d!} \sum_{n=0}^\infty \dfrac{(n-1+d)!}{n!}\dfrac{1}{2^n}} \dfrac{1}{(\nrm P)^2} .
    \end{align*}
    The constant $C_d$ is given by the expression inside the big parenthesis in the last inequality,
    \begin{equation}\label{eq:Cd}
        C_d = \dfrac{d(d+1)}{d!} \sum_{n=0}^\infty \dfrac{(n-1+d)!}{n!}\dfrac{1}{2^n}.
    \end{equation}
    To determine the convergence of the series, it is enough to notice that $(n-1+d)!/n!$ is a polynomial of degree $d-1$ in $n$.
\end{proof}

Combining the estimate of Lemma \ref{lem:bound_kchi} with the prime ideal theorem, we have the following estimate on the tail of the infinite sum of $|k_\chi(P)|/\nrm P$ over primes $P \in \cS$.

\begin{lemma}\label{lem:k_partial_sum} Let $x > 0$. Then
    \[\sum_{\substack{P \in \cS \\ \nrm P>x}} \frac{\left|k_\chi(P)\right|}{\nrm P} = O\left(\frac{1}{x} \right)\]
    where the implied constant depends on the extension $E/F$.
\end{lemma}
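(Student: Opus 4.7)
The plan is to combine the pointwise bound from Lemma \ref{lem:bound_kchi} with a partial-summation argument built on the prime ideal theorem for $F$. Dividing the bound in \eqref{eq:bound_kchi} by $\nrm P$ gives, for every $P \in \cS$,
\begin{equation*}
    \frac{|k_\chi(P)|}{\nrm P} \leq \frac{d(d+1)}{2} \cdot \frac{1}{(\nrm P)^2} + \frac{C_d}{(\nrm P)^3},
\end{equation*}
so it suffices to show that $\sum_{\nrm P > x} (\nrm P)^{-2} = O(1/x)$ and $\sum_{\nrm P > x} (\nrm P)^{-3} = O(1/x^2)$, where the implicit constants depend only on $F$ (hence on $E/F$).

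For the first sum, let $\pi_F(t) \colonequals \#\{P \in \Sigma_F : \nrm P \leq t\}$. By Landau's prime ideal theorem, $\pi_F(t) \ll_F t/\log t$ for $t \geq 2$. Abel summation yields
\begin{equation*}
    \sum_{\nrm P > x} \frac{1}{(\nrm P)^2} = -\frac{\pi_F(x)}{x^2} + 2\int_x^\infty \frac{\pi_F(t)}{t^3}\, dt \ll_F \frac{1}{x \log x} + \int_x^\infty \frac{dt}{t^2 \log t} \ll_F \frac{1}{x}.
\end{equation*}
The identical argument, applied with $t^{-4}$ in place of $t^{-3}$, gives $\sum_{\nrm P > x} (\nrm P)^{-3} = O_F(1/x^2)$, which is absorbed into $O(1/x)$.

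Combining the two estimates produces
\begin{equation*}
    \sum_{\substack{P \in \cS \\ \nrm P > x}} \frac{|k_\chi(P)|}{\nrm P} \leq \frac{d(d+1)}{2}\sum_{\nrm P > x} \frac{1}{(\nrm P)^2} + C_d \sum_{\nrm P > x} \frac{1}{(\nrm P)^3} = O\!\left(\frac{1}{x}\right),
\end{equation*}
with an implied constant depending only on $E/F$ (through $d$, $C_d$, and the prime ideal theorem for $F$). No step is particularly delicate; the only mild subtlety is making sure Abel summation is applied cleanly so that the logarithmic factor in the prime ideal theorem does no harm, which it does not since it only improves the bound by a factor of $\log x$.
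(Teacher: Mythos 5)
Your proposal is correct and follows essentially the same route as the paper: bound $|k_\chi(P)|/\nrm P$ termwise via Lemma \ref{lem:bound_kchi} and then estimate the tails $\sum_{\nrm P > x} (\nrm P)^{-2}$ (and $(\nrm P)^{-3}$) by partial summation with the prime ideal theorem, which is exactly the argument the paper invokes (and sketches in a commented-out block). Your handling of the restriction $P \in \cS$ by simply enlarging the sum is if anything slightly cleaner than the paper's remark that all primes of large norm lie in $\cS$.
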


\begin{proof}
    Notice that when $x$ is sufficiently large, all primes $P$ with norm $\nrm P > x$ are contained in $\cS$. This allows us to drop the requirement in the summation that $P \in \cS$.
    
    By \eqref{eq:bound_kchi} we have
    \[\sum_{\nrm P>x} \frac{\left|k_\chi(P)\right|}{\nrm P} \leq \frac{d^2+d}{2} \sum_{\nrm P > x} \frac{1}{\nrm P^2} + C_d \sum_{\nrm P > x} \frac{1}{\nrm P^3}\]
    for the constant $C_d$ depending on the dimension of the representation associated to $\chi$ given above in \eqref{eq:Cd}.    Of course, $1/\nrm P^3 < 1/\nrm P^2$, so it suffices to show that $\sum_{\nrm P > x} \frac{1}{\nrm P^2} = O(1/x)$. This follows from the same argument as in \eqref{eq:log1term}.

\end{proof}

\begin{lemma}
  \label{lemma:K}
  Let $E/F$ be a Galois extension of number fields with Galois group $G$. Let $\chi$ be a non-trivial irreducible character of $G$, and let $\cS$ be as defined in \eqref{eq:S}. Then
  \begin{equation}
  \label{eq:assymptoticK}
    \prod_{P\in \cS(x)} K_P(1,\chi) = K_{\cS, \chi} + O\paren{\dfrac{1}{x}},
\end{equation}
  when $x \to \infty$ and the implied constant depends on the extension $E/F$. Furthermore, the constant $K_{\cS, \chi}$ is given by
  \begin{equation}
  \label{eq:K_S}
    K_{\cS, \chi} \colonequals \prod_{P\in \cS} K_P(1,\chi).
\end{equation}
\end{lemma}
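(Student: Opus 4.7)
The plan is to mirror the tail-estimation strategy used in the proof of Lemma \ref{lemma:M}, with the role of the bound on $g_{\chi,P}(T)/\nrm P(\nrm P - \chi(P))$ now played by the bound on $k_\chi(P)/\nrm P$ provided by Lemmas \ref{lem:bound_kchi} and \ref{lem:k_partial_sum}. The first step is to observe that Lemma \ref{lem:k_partial_sum} (applied with $x$ replaced by any sufficiently large base point) gives absolute convergence of $\sum_{P \in \cS} |k_\chi(P)|/\nrm P$, and hence that $K_{\cS,\chi} = \prod_{P \in \cS} K_P(1,\chi)$ converges as an absolutely convergent Euler product in which each local factor is non-zero (since $|k_\chi(P)| < \nrm P$ for $P \in \cS$).

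Next, I would factor
\begin{equation*}
    \prod_{P \in \cS(x)} K_P(1,\chi) = K_{\cS,\chi}\cdot \paren{\prod_{\substack{P \in \cS \\ \nrm P > x}} K_P(1,\chi)}^{-1},
\end{equation*}
so the whole estimate reduces to showing the tail product equals $1 + O(1/x)$. Taking logarithms and expanding $-\log(1 - z)$ as a Taylor series, one obtains
\begin{equation*}
    \left| \log \prod_{\substack{P \in \cS \\ \nrm P > x}} K_P(1,\chi) \right|
    = \left| \sum_{\substack{P \in \cS \\ \nrm P > x}} \sum_{j=1}^{\infty} \frac{1}{j}\paren{\frac{k_\chi(P)}{\nrm P}}^j \right|
    \leq \sum_{j=1}^{\infty} \frac{1}{j} \paren{\sum_{\substack{P \in \cS \\ \nrm P > x}} \frac{|k_\chi(P)|}{\nrm P}}^j.
\end{equation*}
Lemma \ref{lem:k_partial_sum} bounds the inner sum by $O(1/x)$, so for $x$ large enough the inner sum is less than $1$ and the geometric-type series converges to $O(1/x)$. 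Exponentiating yields $\prod_{\nrm P > x} K_P(1,\chi) = 1 + O(1/x)$, and inverting gives $1 + O(1/x)$ as well (using $1/(1+u) = 1 + O(u)$ for small $u$). Multiplying through by $K_{\cS,\chi}$ (a bounded constant depending on the extension and $\chi$) produces the claimed asymptotic.

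I do not expect a serious obstacle: the core mechanism — absolute convergence of a log-expansion controlled by a tail bound with decay $1/x$ — is exactly what was executed in the $M$-function case, and the crucial input, namely the $O(1/x)$ tail estimate for $\sum |k_\chi(P)|/\nrm P$, has already been isolated in Lemma \ref{lem:k_partial_sum}. The only minor care points are: (i) justifying the exchange of summation order that yields the geometric majorant (straightforward from absolute convergence once we assume $x$ is large enough that the inner sum is $< 1$), and (ii) confirming that only finitely many primes of small norm are excluded from $\cS$, so that absorbing them into the implied constant is harmless. Beyond these bookkeeping points, the argument is essentially a verbatim adaptation of the tail analysis performed in \eqref{eq:log1term}.
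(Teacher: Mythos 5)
Your proposal is correct and follows essentially the same route as the paper's own proof: isolate the tail $\prod_{P \in \cS,\, \nrm P > x} K_P(1,\chi)$, take logarithms, expand in Taylor series, majorize by the geometric-type series in $\sum_{\nrm P > x} |k_\chi(P)|/\nrm P$, and invoke Lemma \ref{lem:k_partial_sum} to get the $O(1/x)$ bound before exponentiating. The bookkeeping points you flag (validity of the expansion for $P \in \cS$ with $x$ large, and that all primes of large norm lie in $\cS$) are exactly the ones the paper handles.
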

\begin{proof}
    The set $\cS$ as defined in \eqref{eq:S} provides that $|\chi(P)| < \nrm P$, so for all $P \in \cS$, we can see by \eqref{eq:k} that $k_\chi(P) \neq \nrm P$, and hence by \eqref{eq:K_P} the local factor $K_P(1,\chi)$ is both well defined and nonzero. Our goal is thus to show that the infinite product $K_{\cS, \chi}$ converges and that the truncation $\prod_{P \in \cS(x)} K_P(1, \chi)$ converges to it with the error term $O_F\left(\frac{1}{x} \right)$.
    
    Taking logarithms, the limit of 
    \[\log\left(\prod_{P \in \cS(x)} K_P(1,\chi) \right) = -\sum_{P \in \cS(x)} \log \left(1 - \frac{k_\chi(P)}{\nrm P} \right) \]
    converges as $x \to \infty$. To see this, and obtain the desired asymptotic, it suffices to estimate the tail
    \[\left| \sum_{\substack{P \in \cS\\ \nrm P > x}} \log \left(1 - \frac{k_\chi(P)}{\nrm P} \right)\right|.\]
    For $x$ sufficiently large, all primes of sufficiently large norm are in $\cS$, so it suffices to estimate this tail for all $\nrm P > x$. Taking absolute values and using the Taylor series expansion, which is valid since $P \in \cS$, we have
    \begin{align}
       \nonumber \left| \sum_{\nrm P > x} \log \left(1 - \frac{k_\chi(P)}{\nrm P} \right) \right| & \leq \sum_{\nrm P > x} \left| \sum_{j=1}^\infty \frac{1}{j} \left(\frac{k_\chi(P)}{\nrm P}\right)^j\right|  \\
       \notag & \leq  \sum_{\nrm P > x} \sum_{j=1}^\infty \frac{1}{j} \left(\frac{ \left|k_\chi(P)\right|}{\nrm P}\right)^j  \\
       & \leq \sum_{j=1}^\infty \frac{1}{j} \sum_{\nrm P > x} \left(\frac{ \left|k_\chi(P)\right|}{\nrm P}\right)^j  \nonumber \\ 
       \notag & \leq \sum_{j=1}^\infty \frac{1}{j} \left(\sum_{\nrm P > x} \frac{ \left|k_\chi(P)\right|}{\nrm P}\right)^j  = \sum_{j=1}^\infty \frac{1}{j} O_F\left(\frac{1}{x} \right)^j .
    \end{align}
    The last equality follows from Lemma \ref{lem:k_partial_sum}. As in \eqref{eq:log1term}, this establishes that $\prod_{\nrm P > x} K_P(1, \chi) = 1 + O\left(\frac{1}{x}\right)$, completing the proof of \eqref{eq:assymptoticK}.
    
    
\end{proof}

\section{Proof of Theorem \ref{thm:main}}

In this section we will first prove the content of Theorem \ref{thm:main} and then show an alternative determination of the constant following a method shown in \cite[Section 6]{langzacc}. 

\subsection{Proof of the main theorem}
\label{subsection:proofmaintheorem}

The starting point of our proof is the same of Williams, namely, the orthogonality relations for irreducible characters of finite groups. Given a fixed conjugacy class $C$ of $G$, and an unramified prime $P\in S_{E/F}$, we have
\begin{equation}
  \label{eq:OR}
  \sum_{\chi} \chi(P)\overline{\chi}(C) = 
  \begin{cases}
  \frac{|G|}{|C|}, & \mif C = \Frob_P, \\
    0, & \mif C \neq \Frob_P.
\end{cases}
\end{equation}
This leads to the natural generalization of Equation \eqref{eq:williamsOrthogonalityRelations}.
\begin{equation}
  \label{eq:OrthogonalityRelations}
  \prod_{P\in S_C(x)}\paren{1-\dfrac{1}{\nrm P}}^{|G|/|C|} = \prod_{\chi}
  \brc{\prod_{P\in S_{E/F}(x)} \paren{1-\dfrac{1}{\nrm P}}^{\chi(P)}}^{\overline{\chi}(C)}.
\end{equation}
When $\chi = \chi_0$ is the trivial character, Rosen's theorem (Theorem \ref{thm:RosenThm2}) yields
\begin{equation}
  \label{eq:RosenContribution}
  \prod_{P\in S_{E/F}(x)} \paren{1-\dfrac{1}{\nrm P}} = \dfrac{\nrm
  \Delta}{\varphi(\Delta)} \dfrac{e^{-\gamma_F}}{\log x} +
O_F\paren{\dfrac{1}{\log^2 x}} .
\end{equation}
When $\chi\neq \chi_0$, we first split the product as follows
\begin{equation}
  \label{eq:goodbad}
  \prod_{P\in S_{E/F}(x)}\paren{1-\dfrac{1}{\nrm P}}^{\chi(P)} =
  \prod_{P\in S_{E/F}(x) - \cS(x)} \paren{1-\dfrac{1}{\nrm P}}^{\chi(P)} \prod_{P\in \cS(x)} \paren{1-\dfrac{1}{\nrm P}}^{\chi(P)}.
\end{equation}
Call $B_{\cS, \chi}$ the constant given by the product over the primes $P \in S_{E/F} - \cS$ in the right hand side of \eqref{eq:goodbad}. For every $P\in \cS$ we are able to factor out $M_P(1,\chi)$ from the expression, obtaining
\begin{align}
  \prod_{P\in \cS(x)} \paren{1-\dfrac{1}{\nrm P}}^{\chi(P)} = &
   \prod_{P\in \cS(x)} M_P(1,\chi)\inv \prod_{P\in \cS(x)} K_P(1,\chi)
   \label{eq:MK} \\[1em]
  = & \brc{ \dfrac{R_{\cS,\chi}M_{\cS,\chi}}{L(1,\chi)} +
   O\paren{\dfrac{1}{\log x}}} \brc{K_{\cS,\chi} +
   O\paren{\dfrac{1}{x}}} \label{eq:lemmas} \\[1em]
  = &
   \dfrac{R_{\cS,\chi}M_{\cS,\chi}K_{\cS,\chi}}{L(1,\chi)} +
   O\paren{\dfrac{1}{\log x}} .
\end{align}
The equality in \eqref{eq:lemmas} follows from applying Lemma \ref{lemma:M} and Lemma \ref{lemma:K}. Again, the constants only depend on the extension $E/F$.  Assembling the pieces together, we get the desired result. 

Finally, the constant $-\gamma(E/F, C)$ is defined by the equality
\begin{equation}
  \label{eq:constant}
  e^{-\gamma(E/F,
  C)} = \dfrac{\nrm \Delta}{\varphi(\Delta)} \prod_{\chi \neq
    \chi_0}\paren{\dfrac{B_{\cS,\chi}R_{\cS,\chi}M_{\cS,\chi}K_{\cS,\chi}}{L(1,\chi)}}^{\overline{\chi}(C)}
e^{-\gamma_F} .
\end{equation}
Note that the constants $B_{\cS, \chi}$ and $M_{\cS, \chi}$ are easily computed finite products. To obtain a numerical value for $e^{-\gamma(E/F, C)}$ for a given, $E/F$ and $C \subset G$, one would need to compute these along with the infinite products $R_{\cS, \chi}$, $K_{\cS, \chi}$, and the $L$-function $L(1, \chi)$ for each nontrivial character $\chi$ of $G$.

\subsection{An alternative determination of the constant}

Languasco and Zaccagnini observed in \cite{langzacc} that the orthogonality relations of finite group characters can also be used to provide a cleaner formula for the constant $e^{-\gamma(a,b)}$ appearing in Williams' theorem. Their method extends to this setting as well, and we record it here for completeness. 

First, note 
\begin{equation*}
    \lim_{x \to \infty} \prod_{P \in  S_{E/F}(x)}\left(1- \frac{1}{\nrm P}\right)^{\chi(P)} = \frac{K(1,\chi)}{L(1,\chi)}.
\end{equation*}
Thus, from \eqref{eq:constant}, 
\begin{align}
 \notag   e^{-\gamma(E/F, C)} &= e^{-\gamma_F}\frac{\nrm \Delta}{\varphi(\Delta)} \lim_{x \to \infty} \prod_{\chi \neq \chi_0}\prod_{P \in S_{E/F}(x)} \left(1- \frac{1}{\nrm P}\right)^{\chi(P)\overline{\chi(C)}} \\
 \notag   &= e^{-\gamma_F}\frac{\nrm \Delta}{\varphi(\Delta)} \lim_{x \to \infty} \prod_{P \in S_{E/F}(x)} \left(1- \frac{1}{\nrm P}\right)^{\sum_{\chi \neq \chi_0}\chi(P)\overline{\chi(C)}} \\
 \label{eq:nrn/phi}  &=  e^{-\gamma_F} \lim_{x \to \infty} \prod_{P \in \Sigma_{E/F}(x)} \left(1- \frac{1}{\nrm P}\right)^{\alpha(E/F, C; P)}\\
\notag    &= e^{-\gamma_F} \prod_{P \in \Sigma_F} \paren{1-\dfrac{1}{\nrm P}}^{\alpha(E/F, C; P)}
\end{align}
where, using character orthogonality \eqref{eq:OR}, 
$$
\alpha(E/F, C; P) = \begin{cases}
      -1, & P \mid \Delta, \\
      \tfrac{|G|}{|C|}-1, & \Frob_P =C, \\
      -1, & \Frob_P \neq C.
      \end{cases}
$$
and \eqref{eq:nrn/phi} follows from the product formula of the Euler totient function
$$\frac{\varphi(\Delta)}{\nrm \Delta}= \prod_{ P \mid \Delta}\left(1-\frac{1}{\nrm P}\right).$$
This calculation is sufficient to prove \eqref{eq:altconstant} of Theorem \ref{thm:main}.
\section{Examples}

\subsection{Quadratic extensions} Set $F = \QQ$ and let $E = \QQ(\sqrt{D})$, with $D$ square-free, be a quadratic extension of $\QQ$. 

\begin{coro}\label{cor:quadratic}
Let $E/\QQ$ be a quadratic extension of discriminant $\Delta$. Then
\[\prod_{\substack{ \left(\frac{D}{p}\right) = \pm 1 \\ p \, \leq \, x}} \left(1 - \frac{1}{p}\right)  = \left( \frac{\Delta}{\varphi(\Delta)} \frac{e^{-\gamma}}{\log x}  \left[ \frac{\prod_{\left(\frac{D}{p}\right) = 1} (1 - \frac{1}{p})}{\prod_{\left(\frac{D}{p}\right) = -1} (1 - \frac{1}{p})} \right]^{\pm 1} \right)^{1/2} + O\left(\frac{1}{(\log x)^{3/2}}\right),\]
where $\gamma = \gamma_\QQ$ is the usual Euler constant.
\end{coro}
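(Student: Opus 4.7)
The plan is to specialize Theorem \ref{thm:main} to the case $F = \QQ$ and $E = \QQ(\sqrt{D})$. Here $G = \Gal(E/\QQ) \cong \ZZ/2\ZZ$, and the two conjugacy classes $\{1\}$ and $\{\sigma\}$ correspond via the Chebotarev dictionary to primes $p$ that split or are inert in $E$, equivalently to $\left(\tfrac{D}{p}\right) = +1$ or $\left(\tfrac{D}{p}\right) = -1$. In both cases $|C|/|G| = 1/2$ and $\delta(\cC) = 1/2$, so the master formula \eqref{eq:3} immediately delivers both the exponent $1/2$ on the main term and the error $O((\log x)^{-3/2})$ that appear in the corollary.

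It remains to identify $e^{-\gamma(E/\QQ, C)}$ with the quantity inside the outer parentheses in the statement. For this we invoke the alternative expression \eqref{eq:altconstant}. Since $\varkappa_\QQ = 1$ gives $\gamma_\QQ = \gamma$, we have
\[
e^{-\gamma(E/\QQ, C)} \;=\; e^{-\gamma} \prod_{p} \left(1 - \tfrac{1}{p}\right)^{\alpha(E/\QQ,\,C;\,p)}.
\]
I would then split this product according to the three cases in the definition of $\alpha$. The ramified primes (those dividing $\Delta$) contribute
\[
\prod_{p \mid \Delta}\left(1-\tfrac{1}{p}\right)^{-1} \;=\; \frac{\Delta}{\varphi(\Delta)}
\]
by the usual Euler totient formula. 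For the remaining primes, the exponent is $|G|/|C|-1 = 1$ when $\Frob_p = C$ and $-1$ otherwise, so the contribution becomes
\[
\left[\frac{\prod_{(D/p)=1}\!\left(1-\tfrac{1}{p}\right)}{\prod_{(D/p)=-1}\!\left(1-\tfrac{1}{p}\right)}\right]^{\pm 1},
\]
with $+$ when $C = \{1\}$ (the split case) and $-$ when $C = \{\sigma\}$ (the inert case). Assembling the two pieces yields exactly the constant asserted.

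There is no substantive obstacle; the argument is purely a bookkeeping specialization of Theorem \ref{thm:main}. The one minor subtlety worth flagging is that the bracketed ratio in the corollary must be interpreted as a limit of partial products, since neither the numerator nor the denominator converges individually over the infinite sets of split/inert primes; the combined convergence is precisely what is built into \eqref{eq:altconstant}.
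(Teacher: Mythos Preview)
Your proposal is correct and essentially matches the paper's argument: both specialize the main theorem to $E = \QQ(\sqrt{D})$, identify $\Frob_p$ with the Legendre symbol, and unpack the constant via the Euler-totient factorization of the ramified primes and the $\pm1$ exponents on the split/inert products. The paper's exposition derives the constant first via \eqref{eq:OrthogonalityRelations} and \eqref{eq:RosenContribution} before noting that \eqref{eq:altconstant} yields the same formula, whereas you go straight to \eqref{eq:altconstant}; this is the same content in a slightly different order.
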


In this case, $G = \Gal(E/\QQ) \cong \{\pm 1\}$, so there is one nontrivial conjugacy class $\{-1\} \subseteq G$, consisting of the inert primes in $\cO_E$, while the trivial class corresponds to the split primes. Our two characters are the trivial character, $\chi_0$, and the nontrivial character 
\[\chi_1(p) = \begin{cases}
    \,\,\,\, 1,& \mif \, p \text{ is split},\\
    -1,&  \mif \, p \text{ is inert}.
\end{cases}\]
This is precisely the quadratic residue symbol, $\chi_1(p) = \left(\frac{D}{p}\right)$, which in our notation also coincides with $\Frob_p$. 

Following the algorithm implicit in the proof of the main theorem (Subsection \ref{subsection:proofmaintheorem}), we have $\left|\chi_i(p)\right| = 1 < p$ for both $i=0,1$ and all primes $p$, so $\cS = S_{E/\QQ}$ is precisely the set of unramified primes.

First consider the case where $C = \{1\}$. By \eqref{eq:OrthogonalityRelations} and \eqref{eq:RosenContribution} we have
\[\prod_{\substack{ \left(\frac{D}{p}\right) = 1 \\ p \, \leq \, x}} \left( 1 - \frac{1}{p}\right)^{2} = \frac{\Delta}{\varphi(\Delta)} \frac{e^{-\gamma}}{\log x} \frac{\prod_{\left(\frac{D}{p}\right) = 1} (1 - \frac{1}{p})}{\prod_{\left(\frac{D}{p}\right) = -1} (1 - \frac{1}{p})}  + O\left(\frac{1}{\log^{3} x}\right),\]
where $\gamma$ is the usual Euler constant. Taking square roots, we have
\[\prod_{\substack{ \left(\frac{D}{p}\right) = 1 \\ p \, \leq \, x}} \left( 1 - \frac{1}{p}\right) = \left(\frac{\Delta}{\varphi(\Delta)} \frac{e^{-\gamma}}{\log x}  \frac{\prod_{\left(\frac{D}{p}\right) = 1} (1 - \frac{1}{p})}{\prod_{\left(\frac{D}{p}\right) = -1} (1 - \frac{1}{p})} \right)^{1/2}  + O\left(\frac{1}{\log^{3/2} x}\right).\]
If we took $C=\{-1\}$, then we find
\[\prod_{\substack{ \left(\frac{D}{p}\right) = -1 \\ p \, \leq \, x}} \left( 1 - \frac{1}{p}\right) = \left(\frac{\Delta}{\varphi(\Delta)} \frac{e^{-\gamma}}{\log x}  \frac{\prod_{\left(\frac{D}{p}\right) = -1} (1 - \frac{1}{p})}{\prod_{\left(\frac{D}{p}\right) = 1} (1 - \frac{1}{p})}  \right)^{1/2} + O\left(\frac{1}{\log^{3/2} x}\right).\]

Using Theorem \ref{thm:main} \eqref{eq:altconstant} we obtain the exact same formula for the constant. 

\subsection{Primes represented by quadratic forms} Let
\begin{equation*}
    Q(x,y) = ax^2 + bxy + cy^2 \in \ZZ[x,y],
\end{equation*}
be a binary integral quadratic form. Assume that $Q$ is primitive, irreducible, and positive definite. That is, $a$ and $c$ are positive integers with $\gcd(a,b,c)=1$, $D = b^2 -4ac$ is not a square, and $D<0$. An integer $n$ is said to be represented by $Q$ if there exist integers $x$ and $y$ such that $Q(x,y) = n$. 

Denote by $\cQ$ the set of rational primes represented by $Q$.

\begin{coro}
    Let $Q$ be a primitive, irreducible, positive definite, and integral binary quadratic form with discriminant $D$, and let $E$ be the ring class field of the order of $D$. Then,
    \begin{equation}
        \prod_{p\in \cQ(x)} \paren{1-\dfrac{1}{p}} = 
        \paren{ \dfrac{e^{-\gamma(E/\QQ, \, \cC)}}{\log x}}^{\frac{|C|}{2h(D)}}\prod_{\substack{p \, \mid \, \Delta_E \\ p \, \in \, \cQ}}\paren{1-\dfrac{1}{p}} + O\paren{\dfrac{1}{(\log x)^{1+\frac{|C|}{2h(D)}}}},
    \end{equation}
    where $C \subset \Gal(E/\QQ)$ is the conjugacy class corresponding to $Q$ via class field theory, and $h(D)$ is the class number of $\QQ(\sqrt{D})$.
\end{coro}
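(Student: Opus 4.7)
The plan is to deduce this result directly from Theorem \ref{thm:main} using the classical class field theoretic characterization of primes represented by a binary quadratic form. By a theorem going back to Gauss, Kronecker, and Weber, the ring class field $E$ of the order of discriminant $D$ satisfies $[E:\QQ] = 2h(D)$, and for every prime $p$ unramified in $E$, the prime $p$ is represented by $Q$ if and only if $\Frob_p$ lies in a conjugacy class $C \subset \Gal(E/\QQ)$ uniquely determined by the form class $[Q]$.

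First I would partition $\cQ$ into the finite set $\cQ_{\mathrm{ram}} \colonequals \{p \in \cQ : p \mid \Delta_E\}$ and the Chebotarev set $\cC \subset \Sigma_\QQ$ of unramified primes with $\Frob_p = C$. For $x$ sufficiently large every element of $\cQ_{\mathrm{ram}}$ is at most $x$, so
\[
\prod_{p \in \cQ(x)} \paren{1 - \dfrac{1}{p}}
= \paren{\prod_{\substack{p \mid \Delta_E \\ p \in \cQ}} \paren{1 - \dfrac{1}{p}}} \cdot \prod_{p \in \cC(x)} \paren{1 - \dfrac{1}{p}}.
\]
Next I would apply Theorem \ref{thm:main} to $E/\QQ$ with Galois group of order $2h(D)$ and conjugacy class $C$, obtaining
\[
\prod_{p \in \cC(x)} \paren{1 - \dfrac{1}{p}} = \paren{\dfrac{e^{-\gamma(E/\QQ, C)}}{\log x}}^{|C|/(2h(D))} + O\paren{\dfrac{1}{(\log x)^{1 + |C|/(2h(D))}}}.
\]
Multiplying by the bounded finite product over the ramified primes in $\cQ$ preserves the exact shape of the main term asserted in the corollary and absorbs the error into an $O$-term of the same order.

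All analytic content is thus packaged inside Theorem \ref{thm:main}, and the only other ingredient is classical class field theory, so I do not anticipate a genuine obstacle. The subtlest point is merely identifying $|C|$ in terms of $[Q]$: in $\Gal(E/\QQ) \cong \mathrm{Cl}(\cO) \rtimes \Gal(\QQ(\sqrt{D})/\QQ)$, complex conjugation acts on $\mathrm{Cl}(\cO)$ by inversion, so the conjugacy class of $[Q]$ is $\{[Q], [Q]^{-1}\}$, of size $1$ when $Q$ is ambiguous and $2$ otherwise. This affects only the numerical value of the exponent, not the structure of the argument.
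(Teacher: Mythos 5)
Your proposal is correct and follows essentially the same route as the paper: identify $[Q]$ with an element of $\Gal(E/\QQ(\sqrt{D}))$ via the Artin map (Cox, Theorem 9.12), note that $\cQ$ differs from the Chebotarev set $\cC$ of the conjugacy class $\{[Q],[Q]^{-1}\}$ only by the finitely many ramified primes dividing $\Delta_E$ that $Q$ represents, split off that finite product, and apply Theorem~\ref{thm:main} to $E/\QQ$ with $|G| = 2h(D)$. Your closing remark that $|C| \in \{1,2\}$ according to whether $[Q]$ equals its inverse matches the paper's dichotomy of $Q$ being equivalent to its opposite or not.
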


\begin{proof}
  By class field theory and the theory of quadratic forms, see for example \cite[Chapter 9]{cox}, the class $[Q]$ corresponds to an element $\sigma_0 \in \Gal(E/\QQ(\sqrt{D})) \subseteq \Gal(E/\QQ)$. Therefore, the class $C$ is the $\Gal(E/\QQ)$-conjugacy class of $\sigma_0$. The result follows by noting that $\cQ - \cC$ is the finite set of primes ramified in $L$ that are represented by $Q$. In particular 
  \begin{equation*}
      \delta(\cC) = \delta(\cQ) = \begin{cases}
      \frac{1}{2h(D)}, & \mif Q \text{ is equivalent to its opposite.}\\
      \frac{1}{h(D)}, & \text{otherwise.}
      \end{cases}
  \end{equation*}
The relation between $\cC$ and $\cQ$ is made explicit in the proof of \cite[Theorem 9.12]{cox} using the ring class field as described in \cite[Section 9.A]{cox}.
\end{proof}

\subsection{General abelian extensions} In the special case the Galois group $G$ is abelian, all irreducible representations are one-dimensional. In particular, the trace of Frobenius is a root of unity, and as such it has absolute value strictly smaller that the norm of every prime. In our notation, this means $\cS=S_{E/F}$. Moreover, the Artin $L$-function coincides with the $M$-function, and we have the following corollary.

\begin{coro}
  \label{cor:abelian}
  Let $E/ F$ be an abelian Galois extension of number fields, with Galois group $G$, and let $g\in G$ be any element. Then,
    \begin{equation}
        \label{eq:abelian}
        \prod_{\substack{P\in S_{E/F}(x)\\ \Frob_P = g}} \paren{1-\dfrac{1}{\nrm P}} =
        \paren{\dfrac{e^{-\gamma(E/F,g)}}{\log
        x}}^{1/[E:F]}+O\paren{\dfrac{1}{(\log x)^{1+1/[E:F]}}}
    \end{equation}
  when $x \to \infty$ and the implied constant depends on the extension $E/F$. Furthermore, the constant $\gamma(E/F,g)$ is given by
  \begin{align*}
      e^{-\gamma(E/F,g)} &= e^{-\gamma_F}\dfrac{\nrm(\Delta)}{\varphi(\Delta)}\prod_{\chi\neq\chi_0}\paren{\prod_{P\nmid \Delta}\dfrac{K_P(1,\chi)}{L_P(1,\chi)}}^{\overline{\chi}(g)} \\
      &= e^{-\gamma_F}\dfrac{\nrm(\Delta)}{\varphi(\Delta)}\prod_{\substack{P \\ \mathrm{Frob}_P=g}}\left(1-\frac{1}{\nrm P}\right)^{[E:F]-1}\prod_{\substack{P \\ \mathrm{Frob}_P\neq g}}\left(1-\frac{1}{\nrm P}\right)^{-1}.
  \end{align*}
\end{coro}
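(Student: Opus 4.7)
The plan is to specialize Theorem \ref{thm:main} and its reformulation \eqref{eq:altconstant} to the abelian case. When $G$ is abelian every conjugacy class is a singleton, so fixing $g\in G$ is the same as fixing $C=\{g\}$ with $|C|/|G|=1/[E:F]$ and $\delta(\cC)=1/[E:F]$. The asymptotic \eqref{eq:abelian} and its error term $O((\log x)^{-1-1/[E:F]})$ then read off directly from \eqref{eq:3}, so only the two displayed expressions for $e^{-\gamma(E/F,g)}$ require argument.

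For the first expression I would derive it from \eqref{eq:constant} by recording two simplifications that are specific to the abelian setting. First, every irreducible character $\chi$ of $G$ is one-dimensional, so $|\chi(P)|\leq 1<\nrm P$ for every $P\in S_{E/F}$; consequently the set $\cS$ defined in \eqref{eq:S} coincides with $S_{E/F}$, and the product $B_{\cS,\chi}$ running over $S_{E/F}-\cS$ is empty, so $B_{\cS,\chi}=1$. Second, for a one-dimensional representation the characteristic polynomial of Frobenius is simply $f_{\chi,P}(T)=1-\chi(P)T$, with no higher-order terms, so $g_{\chi,P}\equiv 0$ and hence $R_P(1,\chi)=1$, which gives $R_{\cS,\chi}=1$. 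Moreover $M_{\cS,\chi}=\prod_{P\mid\Delta}L_P(1,\chi)$, so combining this with the Euler product $L(1,\chi)=\prod_P L_P(1,\chi)$ the ramified local factors cancel and
\[
\dfrac{M_{\cS,\chi}K_{\cS,\chi}}{L(1,\chi)} = \prod_{P\nmid\Delta}\dfrac{K_P(1,\chi)}{L_P(1,\chi)},
\]
yielding the first displayed formula after substitution into \eqref{eq:constant}.

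For the second (purely multiplicative) formula, I would invoke \eqref{eq:altconstant} directly. Since $|G|/|C|=[E:F]$, the exponent $\alpha(E/F,\{g\};P)$ equals $[E:F]-1$ exactly when $P$ is unramified and $\Frob_P=g$, and equals $-1$ in the two remaining cases (either $P\mid\Delta$ or $P$ unramified with $\Frob_P\neq g$). Partitioning $\Sigma_F$ into these three disjoint classes and applying the standard product formula $\varphi(\Delta)/\nrm(\Delta)=\prod_{P\mid\Delta}(1-1/\nrm P)$ collects the ramified contribution into the factor $\nrm(\Delta)/\varphi(\Delta)$, while the unramified primes assemble into the two displayed products with exponents $[E:F]-1$ and $-1$ respectively.

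There is no real conceptual obstacle here: all the analytic work has already been carried out in Theorem \ref{thm:main}, and the corollary reduces to bookkeeping. The only step I would verify most carefully is the vanishing of the auxiliary factors $B_{\cS,\chi}$ and $R_{\cS,\chi}$ in the one-dimensional case, since it is precisely this collapse that allows \eqref{eq:constant} to reduce to the clean form stated in the corollary.
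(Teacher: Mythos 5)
Your proposal is correct and matches the paper's own route: the paper derives this corollary exactly by observing that one-dimensionality forces $\cS = S_{E/F}$ (so $B_{\cS,\chi}=1$), makes $g_{\chi,P}\equiv 0$ so that $L(s,\chi)=M(s,\chi)$ and $R_{\cS,\chi}=1$, and then specializes \eqref{eq:constant} and \eqref{eq:altconstant} with $C=\{g\}$, $|C|/|G|=1/[E:F]$. Your bookkeeping of the cancellation of ramified Euler factors and of the exponents $\alpha(E/F,\{g\};P)$ is exactly the intended argument.
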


\subsection{Sextic $S_3$-extensions} Finally, we consider the case when $E/\QQ$ sextic $S_3$-extension. We denote the three conjugacy classes of $G$ by the identity class $C_1$, the class of transpositions $C_2$, and the class of 3-cycles $C_3$. The three irreducible characters $\chi_0, \chi_1, \chi_2$ are given by the character table in Figure \ref{fig:char_s3}.

\begin{figure}[h]
    \centering
    \begin{tabular}{|c|c|c|c|}
       \hline & $C_1$ & $C_2$ & $C_3$  \\ \hline
        $\chi_0$ & 1 & 1 & 1 \\ \hline
        $\chi_1$ & 1 & -1& 1 \\ \hline
        $\chi_2$ & 2 & 0 & -1 \\\hline
    \end{tabular}
    \caption{The character table for $S_3$}
    \label{fig:char_s3}
\end{figure}

It is clear from the table that for all \textit{odd} primes $p$, we have $|\chi(p)| < p$, so all odd unramified primes are contained in $\cS$. For the even prime, $2 \notin \cS$ if (i) it is ramified or (ii) if it is unramified and $\chi(2) = 2$ for some $\chi$. From Figure \ref{fig:char_s3}, (ii) can only occur for $\chi_2$ in the case where $\Frob_2$ is the identity class, i.e., precisely when 2 is totally split in $E$. This condition does occur, for example it happens with $p=2$ in the case where $E$ is the splitting field of $x^{6} - 2 x^{5} - 14 x^{3} + 123 x^{2} - 208 x + 164$ over $\QQ$ \cite[Number field \href{https://www.lmfdb.org/NumberField/6.0.80062991.1}{6.0.80062991.1}]{lmfdb6.0.80062991.1}.

This allows us to compute $B_{\cS, \chi}$:
\begin{equation} \label{eq:B_S3}
    B_{\cS, \chi}  = \prod_{p \in S_{E/\QQ} - \cS} \paren{1-\dfrac{1}{\nrm P}}^{\chi(P)} = \begin{cases}
    \frac{1}{2^{\chi(2)}}, & \text{if 2 is unramified and } \Frob_2 = C_1,\\
    1, & \text{otherwise}.
\end{cases}\end{equation}
Similarly we can compute $M_{\cS, \chi}$:
\begin{align}\label{eq:M_S3}
    \nonumber M_{\cS, \chi} &= \prod_{p \in \Sigma_{\QQ} - \cS} L_p(1, \chi) \\
    &= \begin{cases}
    L_2(1,\chi) \prod_{p \mid \Delta} L_p(1,\chi),& \text{if 2 is unramified and } \Frob_2 = C_1,\\
    \prod_{p \mid \Delta} L_p(1,\chi), & \text{otherwise}.
    \end{cases}
\end{align}

From the definition of $k_\chi(p)$ in \eqref{eq:k}, we have
\[k_\chi(p) = \begin{cases}
    0, & \text{if } \chi(p) = 0 \text{ or } \chi(p) = 1, \\
    1/p, & \text{if } \chi(p) = -1, \\
    p/(p-1)^2, & \text{if } \chi(p) = 2.
\end{cases}\]
This allows us to produce $K(1,\chi)$ for $\chi = \chi_1, \chi_2$ according to \eqref{eq:K_P}:
\begin{align*}
    K(1, \chi_1) &= \prod_{\Frob_p = C_2} \left(1 - \frac{1}{p^2}\right)^{-1}\\
    K(1, \chi_2) &= \prod_{\Frob_p = C_1} \left(1 - \frac{1}{(p-1)^2}\right)^{-1} \prod_{\Frob_p = C_3} \left(1 - \frac{1}{p^2}\right)^{-1}
\end{align*}

It remains to describe $L$ and $R_{\cS, \chi}$. Since $\chi = \chi_1$ is one dimensional, we have $L_p(s,\chi) = M_p(s, \chi)$ and $R_{\cS, \chi_1} = 1$. On the other hand, $\chi_2$ is two dimensional, and as such $R_{\cS, \chi_2}$ is nontrivial. Thus, if 2 is not totally split $E/\QQ$, we may use \eqref{eq:B_S3} and \eqref{eq:M_S3} to give a more explicit description of $e^{-\gamma(E/\QQ, C)}$ given in \eqref{eq:constant}:
\begin{align*}
    e^{-\gamma(E/\QQ, C_1)} &= e^{-\gamma} \frac{\nrm \Delta}{\varphi(\Delta)} \left( \prod_{p \, \nmid \, \Delta} L_p(1,\chi_1) \prod_{\Frob_p = C_2} \left(1 - \frac{1}{p^2}\right)^{-1} \right) \times \\
    &\left( R_{\cS, \chi_2} \prod_{p \, \nmid \, \Delta}L_p(1, \chi_2)\inv
    \prod_{\Frob_p = C_1} \left(1 - \frac{1}{(p-1)^2}\right)^{-1} \prod_{\Frob_p = C_3} \left(1 - \frac{1}{p^2}\right)^{-1} \right)^2 ,\\
    e^{-\gamma(E/\QQ, C_2)} &= e^{-\gamma}\frac{\nrm \Delta}{\varphi(\Delta)} \prod_{p \, \nmid \, \Delta}L(1, \chi_1) \prod_{\Frob_p = C_2} \left(1 - \frac{1}{p^2}\right) ,\\
    e^{-\gamma(E/\QQ, C_3)} &= e^{-\gamma}\frac{\nrm \Delta}{\varphi(\Delta)} \left( \prod_{p \, \nmid \, \Delta} L_p(1,\chi_1)\inv \prod_{\Frob_p = C_2} \left(1 - \frac{1}{p^2}\right)^{-1} \right) \times\\ 
    &\left( \frac{\prod_{p \, \nmid \, \Delta}L_p(1, \chi_2)}{R_{\cS, \chi_2}} \prod_{\Frob_p = C_1} \left(1 - \frac{1}{(p-1)^2}\right) \prod_{\Frob_p = C_3} \left(1 - \frac{1}{p^2}\right) \right).
\end{align*}

If $2$ is unramified and totally split in $E$, these can be modified by taking $B_{\cS, \chi}$ and $M_{\cS, \chi}$ as in \eqref{eq:B_S3} and \eqref{eq:M_S3}.

We can use Theorem \ref{thm:main} \eqref{eq:altconstant} for an alternate determination of the constants $e^{-\gamma(E/\mathbb{Q}, C_i)}$ above. One finds,
\begin{align*}
    e^{-\gamma(E/\mathbb{Q}, C_1)} &= e^{-\gamma_F}\frac{\nrm(\Delta)}{\varphi(\Delta)}\prod_{\mathrm{Frob}_P=C_1}\left(1-\frac{1}{\nrm P}\right)^{5}\prod_{\mathrm{Frob}_P\neq C_1}\left(1-\frac{1}{\nrm P}\right)^{-1},\\
 e^{-\gamma(E/\mathbb{Q}, C_2)} &= e^{-\gamma_F}\frac{\nrm(\Delta)}{\varphi(\Delta)}\prod_{\mathrm{Frob}_P=C_2}\left(1-\frac{1}{\nrm P}\right)^{1}\prod_{\mathrm{Frob}_P\neq C_2}\left(1-\frac{1}{\nrm P}\right)^{-1},\\
     e^{-\gamma(E/\mathbb{Q}, C_3)} &= e^{-\gamma_F}\frac{\nrm(\Delta)}{\varphi(\Delta)}\prod_{\mathrm{Frob}_P=C_3}\left(1-\frac{1}{\nrm P}\right)^{2}\prod_{\mathrm{Frob}_P\neq C_3}\left(1-\frac{1}{\nrm P}\right)^{-1} .\\
\end{align*}

\subsection{Future work}
We suspect our methods can be extended to the case of global function fields in a straightforward manner. More generally, it would be interesting to consider the case of varieties over finite fields, by using Lebaque's \cite{lebacque2007} generalization of Mertens' theorem in place of Rosen's theorem (Theorem \ref{thm:RosenThm2}).
\begin{theorem}[Theorem 5 in \cite{lebacque2007}]
    Let $X$ be a smooth, projective, and geometrically irreducible variety of dimension $d$ defined over a finite field $\Fq$. Call $\varkappa_X$ the residue of the Weil zeta function $\zeta_X(s)$ at $s=d$. Then
    \begin{equation}
        \prod_{\deg P \, \leq \, N} \paren{1-\frac{1}{(\nrm P)^d}} = \frac{e^{-\gamma_X}}{N} + O\paren{\frac{1}{N^2}},
    \end{equation}
    where the product runs over the closed points $P \in X$ and $\gamma_X = \gamma + \log(\varkappa_X \log q)$.
\end{theorem}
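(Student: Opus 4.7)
The plan is to adapt the strategy behind Rosen's Theorem \ref{thm:RosenThm2} to the geometric setting, replacing the Dedekind zeta function with the Weil zeta function $\zeta_X(s)$ and exploiting the exceptionally strong point-counting estimates available over finite fields. I would begin by taking logarithms of the product and expanding, writing
\begin{equation*}
    -\log \prod_{\deg P \, \leq \, N}\paren{1-\tfrac{1}{(\nrm P)^d}} \;=\; \sum_{\deg P \, \leq \, N}\frac{1}{(\nrm P)^d} \;+\; \sum_{k \geq 2} \frac{1}{k}\sum_{\deg P \, \leq \, N}\frac{1}{(\nrm P)^{kd}},
\end{equation*}
and then handle the two pieces separately. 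The higher-order terms ($k \geq 2$) converge absolutely since $(\nrm P)^d \geq q^d \geq 2$; using even the trivial bound $|X(\Fq^n)| = O(q^{dn})$, the portion with $\deg P > N$ decays like $O(q^{-N})$, so these terms contribute an absolute constant plus negligible error.

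The heart of the argument is the linear term. By Deligne's theorem (the Riemann hypothesis over finite fields), we have $|X(\Fq^n)| = q^{dn} + O(q^{(d-1/2)n})$, and Möbius inversion yields that the number $a_n$ of closed points of exact degree $n$ satisfies $a_n = \frac{q^{dn}}{n} + O\paren{\frac{q^{(d-1/2)n}}{n}}$. Hence
\begin{equation*}
    \sum_{\deg P \, = \, n} \frac{1}{(\nrm P)^d} \;=\; \frac{a_n}{q^{dn}} \;=\; \frac{1}{n} + O\paren{q^{-n/2}},
\end{equation*}
so summing over $n \leq N$ gives $\sum_{\deg P \leq N}(\nrm P)^{-d} = \log N + \gamma + C_X + O(1/N)$ for an explicit constant $C_X$ built from the convergent tail of the error terms. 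Combining with the constant from the $k \geq 2$ sum and exponentiating, using $e^{-\log N + O(1/N)} = \tfrac{1}{N}\paren{1 + O(1/N)}$, produces the desired asymptotic with error $O(N^{-2})$.

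The delicate step, and the main obstacle, is identifying the resulting constant as $\gamma_X = \gamma + \log(\varkappa_X \log q)$. The cleanest route is to match the partial-sum expression $\log \zeta_X(s) = \sum_{P, k}(k(\nrm P)^{ks})^{-1}$ against the Laurent expansion $\zeta_X(s) = \varkappa_X/(s-d) + O(1)$ near $s = d$ via a Tauberian-style comparison analogous to \cite[Theorem 2]{rosen1999}, which extracts the constant directly. A more elementary sanity check proceeds by change of parameterization: a Rosen-type statement in the norm variable $x = q^N$ predicts $e^{-\gamma_F}/\log x$ with $\gamma_F = \gamma + \log \varkappa_X$, and since $\log x = N \log q$ this rescales to $e^{-(\gamma_F + \log \log q)}/N$, which is exactly $e^{-\gamma_X}/N$. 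Either route requires careful bookkeeping to produce the $\log q$ factor arising from the discretization of degrees; the remainder of the argument is routine given Deligne's estimates.
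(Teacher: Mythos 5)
First, a caveat about the comparison you were asked to make: the paper does not prove this statement at all. It is quoted from Lebacque (Theorem 5 of \cite{lebacque2007}) in the future-work subsection precisely because the authors have \emph{not} carried out the function-field analogue of their argument, so your sketch can only be judged on its own terms. On those terms, the skeleton is sound: splitting $-\log$ of the truncated product into the linear term and the $k\ge 2$ terms, bounding the latter by a convergent sum with exponentially small tail, and using Lang--Weil/Deligne plus M\"obius inversion to get $a_n = q^{dn}/n + O(q^{(d-1/2)n})$, hence $\sum_{\deg P = n}(\nrm P)^{-d} = \tfrac1n + O(q^{-n/2})$, does yield a main term $c_X/N$ with relative error $O(1/N)$ and hence absolute error $O(1/N^2)$ after exponentiating.

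The genuine gap is that the identification $c_X = e^{-\gamma_X}$, which is the actual content of the theorem, is never carried out. Your second route is, as you concede, only a consistency check: there is no Rosen-type theorem in the continuous variable $x$ to rescale, since norms of closed points are concentrated at powers of $q$, so ``$e^{-\gamma_F}/\log x$ with $x=q^N$'' is not an available input. Your first route is the right idea but is vaguer than it needs to be; in fact no Tauberian argument is required, because Deligne's bounds make everything converge inside a disc strictly larger than $|T|=q^{-d}$. Concretely: regroup the expansion of the logarithm by total degree $m = k\deg P$, so that with $N_m := |X(\mathbb{F}_{q^m})|$ one has $\sum_{\deg P \le N}\log\paren{1-(\nrm P)^{-d}}^{-1} = \sum_{m\le N} N_m q^{-dm}/m + O(q^{-dN/2})$ (your truncation at $\deg P\le N$ keeps prime-power terms with $k\deg P > N$, which must be shown negligible, and are). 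Now set $f(T) := \log Z_X(T) + \log(1-q^dT) = \sum_{m\ge 1}(N_m - q^{dm})T^m/m$; by Deligne the coefficients are $O(q^{(d-1/2)m}/m)$, so $f$ is holomorphic on $|T|<q^{-(d-1/2)}$ and $\sum_{m\le N}(N_m q^{-dm}-1)/m = f(q^{-d}) + O(q^{-N/2})$, with $f(q^{-d}) = \log\paren{\lim_{T\to q^{-d}}(1-q^dT)Z_X(T)}$. Converting between the $T$- and $s$-variables via $1-q^{d-s} = (s-d)\log q + O((s-d)^2)$ shows this limit equals $\varkappa_X\log q$ --- this is exactly where the $\log q$ in $\gamma_X$ enters, the ``careful bookkeeping'' you deferred. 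Combining with $\sum_{m\le N}1/m = \log N + \gamma + O(1/N)$ and exponentiating gives $e^{-\gamma}/(\varkappa_X\log q)\cdot N^{-1} + O(N^{-2}) = e^{-\gamma_X}/N + O(N^{-2})$. With this step supplied, your outline becomes a complete and rather elementary proof.
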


\section{Acknowledgements}
We would like to thank Robert Lemke Oliver and David Zureick-Brown for helpful conversations and Paul Pollack bringing the work of Languasco and Zaccagnini to our attention. We also thank Kenneth Williams for suggesting we investigate the case of primes represented by quadratic forms.

\bibliography{NFChebotarevMertensSAPDKCK.bib}{} \bibliographystyle{amsalpha}

\end{document}